\batchmode
\makeatletter
\def\input@path{{\string"/Users/russw/Documents/Research/mypapers/Results on the regularity of square-free monomial ideals/\string"/}}
\makeatother
\documentclass[12pt,english]{amsart}
\usepackage[T1]{fontenc}
\usepackage[latin9]{inputenc}
\usepackage{geometry}
\geometry{verbose,tmargin=1.5in,bmargin=1.5in,lmargin=1in,rmargin=1in}
\usepackage{color}
\usepackage{babel}
\usepackage{url}
\usepackage{enumitem}
\usepackage{amsthm}
\usepackage{amssymb}
\usepackage{graphicx}
\usepackage[unicode=true,
 bookmarks=true,bookmarksnumbered=true,bookmarksopen=true,bookmarksopenlevel=1,
 breaklinks=false,pdfborder={0 0 1},backref=false,colorlinks=true]
 {hyperref}
\hypersetup{pdftitle={Regularity of Edge Ideals},
 pdfauthor={Huy T\`ai H\`a and Russ Woodroofe},
 pdfsubject={Combinatorial Commutative Algebra},
 pdfkeywords={edge ideal, regularity, hypergraphs},
 linkcolor=blue,pdfproducer={Latex with hyperref},letterpaper=true,pdfcreator={latex->dvips->ps2pdf}}
\usepackage{breakurl}

\makeatletter
\numberwithin{equation}{section}
\numberwithin{figure}{section}
\theoremstyle{plain}
\newtheorem{thm}{\protect\theoremname}[section]
  \theoremstyle{definition}
  \newtheorem{defn}[thm]{\protect\definitionname}
  \theoremstyle{plain}
  \newtheorem{lem}[thm]{\protect\lemmaname}
  \theoremstyle{remark}
  \newtheorem{rem}[thm]{\protect\remarkname}
  \theoremstyle{definition}
  \newtheorem{example}[thm]{\protect\examplename}
  \theoremstyle{plain}
  \newtheorem{cor}[thm]{\protect\corollaryname}
  \theoremstyle{plain}
  \newtheorem{prop}[thm]{\protect\propositionname}
 \newlist{casenv}{enumerate}{4}
 \setlist[casenv]{leftmargin=*,align=left,widest={iiii}}
 \setlist[casenv,1]{label={{\itshape\ \casename} \arabic*.},ref=\arabic*}
 \setlist[casenv,2]{label={{\itshape\ \casename} \roman*.},ref=\roman*}
 \setlist[casenv,3]{label={{\itshape\ \casename\ \alph*.}},ref=\alph*}
 \setlist[casenv,4]{label={{\itshape\ \casename} \arabic*.},ref=\arabic*}

\makeatother

  \providecommand{\corollaryname}{Corollary}
  \providecommand{\definitionname}{Definition}
  \providecommand{\examplename}{Example}
  \providecommand{\lemmaname}{Lemma}
  \providecommand{\propositionname}{Proposition}
  \providecommand{\remarkname}{Remark}
 \providecommand{\casename}{Case}
\providecommand{\theoremname}{Theorem}

\begin{document}
\global\long\def\E{\mathcal{E}}

\global\long\def\H{\mathcal{H}}

\global\long\def\Y{\mathcal{Y}}

\global\long\def\reg{\operatorname{reg}}

\global\long\def\tor{\operatorname{Tor}}

\global\long\def\indmatch{\nu_{\mathrm{ind}}}

\global\long\def\mmmatch{\nu_{\mathrm{min}}}

\global\long\def\link{\operatorname{link}}

\global\long\def\del{\operatorname{del}}

\global\long\def\susp{\operatorname{susp}}

\title{Results on the regularity of square-free monomial ideals}

\author{Huy Tài Hà}

\address{Department of Mathematics\\
Tulane University\\
6823 St. Charles Ave., New Orleans, LA 70118}

\urladdr{\url{http://www.math.tulane.edu/\~tai/} }

\email{tai@math.tulane.edu}

\thanks{The first author is partially supported by NSA grant H98230-11-1-0165.}

\author{Russ Woodroofe}

\address{Department of Mathematics \& Statistics\\
Missisissippi State University\\
Starkville, MS 39762}

\urladdr{\url{http://rwoodroofe.math.msstate.edu/}}

\email{rwoodroofe@math.msstate.edu}
\begin{abstract}
In a 2008 paper, the first author and Van Tuyl proved that the regularity
of the edge ideal of a graph $G$ is at most one greater than the
matching number of $G$. In this note, we provide a generalization
of this result to any square-free monomial ideal. We define a 2-collage
in a simple hypergraph to be a collection of edges with the property
that for any edge $E$ of the hypergraph, there exists an edge $F$
in the 2-collage such that $|E\setminus F|\le1$. The Castelnuovo-Mumford
regularity of the edge ideal of a simple hypergraph is bounded above
by a multiple of the minimum size of a 2-collage. We also give a recursive
formula to compute the regularity of a vertex-decomposable hypergraph.
Finally, we show that regularity in the graph case is bounded by a
certain statistic based on maximal packings of nondegenerate star
subgraphs. 
\end{abstract}
\maketitle

\section{Introduction}

Let $k$ be a field. There is a natural correspondence between square-free
monomial ideals in $R=k[x_{1},\dots,x_{n}]$ and simple hypergraphs
over the vertices $V=\{x_{1},\dots,x_{n}\}$. This correspondence
has evolved to be an instrumental tool in an active research program
in combinatorial commutative algebra --- we recommend any of \cite{Herzog/Hibi:2011,Miller/Sturmfels:2005,Morey/Villarreal:2012,Villarreal:2001}
for an overview. One goal of this research program is to relate algebraic
properties and invariants of a square-free monomial ideal to combinatorial
properties and statistics of the corresponding simple hypergraph.
In this note, we will examine the Castelnuovo-Mumford regularity of
such ideals, which has been previously studied in work including \cite{Bouchat/Ha/OKeefe:2011,Dao/Huneke/Schweig:2013,Francisco/Ha/VanTuyl:2009,Kimura:2012,Kummini:2009,Nevo:2011,VanTuyl:2009}. 

In \cite[Theorem 6.7]{Ha/VanTuyl:2008}, the first author and Van
Tuyl showed that the regularity of the edge ideal of a graph $G$
is at most one greater than the matching number of $G$. Indeed, it
follows from their proof and was explicitly noticed in \cite{Woodroofe:2010UNPc}
that the regularity of the edge ideal of a graph is at most one greater
than the minimum size of a maximal matching. The first goal of this
note is extend this result to the edge ideal of a hypergraph, i.e.,
to any square-free monomial ideal.

Our first bound for the Castelnuovo-Mumford regularity of a square-free
monomial ideal is based on the notion of 2-collage. If $\mathcal{H}=(V,\E)$
is a hypergraph, then a \emph{2-collage for $\H$} is a subset $\mathcal{C}$
of the edges with the property that for each $E\in\E$ we can delete
a vertex $v$ so that $E\setminus\{v\}$ is contained in some edge
of $\mathcal{C}$. For uniform hypergraphs, the condition for a collection
$\mathcal{C}$ of the edges to be a 2-collage is equivalent to requiring
that for any edge $E$ not in $\mathcal{C}$, there exists $F\in\mathcal{C}$
such that the symmetric difference of $E$ and $F$ consists of exactly
two vertices. When $\H$ is a graph, it is straightforward to see
that for any minimal 2-collage, there is a maximal matching of the
same or lesser cardinality. Our first main result is:
\begin{thm}
\label{intro.thm.collage-uniform} Let $\H$ be a simple $d$-uniform
hypergraph with edge ideal $I\subseteq R$, and let $c$ be the minimum
size of a 2-collage in $\H$. Then $\reg(R/I)\le(d-1)c$. 
\end{thm}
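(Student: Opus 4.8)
My plan is to prove the theorem as a special case of a statement valid for \emph{arbitrary} simple hypergraphs: if $\mathcal{C}$ is any $2$-collage of $\mathcal{H}$, then $\reg(R/I(\mathcal{H}))\le\sum_{F\in\mathcal{C}}(|F|-1)$. When $\mathcal{H}$ is $d$-uniform every edge has size $d$, so a minimum $2$-collage of size $c$ gives $\reg(R/I)\le(d-1)c$ (and for $d=2$ one recovers the bound of Hà and Van Tuyl via a maximal matching). Passing to non-uniform hypergraphs is not optional: the induction below produces new hypergraphs by \emph{contracting} a vertex --- deleting it from every edge that contains it --- and such contractions destroy $d$-uniformity even when $\mathcal{H}$ is $d$-uniform. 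I would induct on the number of vertices of $\mathcal{H}$, the base case (no edges, $I=0$, $\reg(R/I)=0$) being immediate.

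The engine of the step is the short exact sequence $0\to\bigl(R/(I:x)\bigr)(-1)\to R/I\to R/(I,x)\to 0$ for a variable $x$, giving $\reg(R/I)\le\max\{\reg(R/(I:x))+1,\ \reg(R/(I,x))\}$ (and its analogue with a monomial $m$ of degree $t$ in place of $x$, with shift $t$). Combinatorially, $(I:x)$ is the edge ideal of the contraction of $\mathcal{H}$ at $x$, while $R/(I,x)$ is --- after deleting the variable $x$, which does not change regularity --- the edge ring of the deletion of $\mathcal{H}$ (remove $x$ and every edge through it); iterating these sequences and, along the way, discarding any variables that have become minimal generators, bounds $\reg(R/I(\mathcal{H}))$ by a maximum of regularities of edge rings of hypergraphs built from $\mathcal{H}$ by a combination of contractions and vertex deletions, each on strictly fewer vertices. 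The idea is to apply such a chain relative to the vertices of a single edge $F$ of the chosen optimal $2$-collage $\mathcal{C}$, arranging the bookkeeping so that the shifts produced by the exact sequences are paid for by savings that come from removing $F$ from the collage.

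This reduces everything to a combinatorial claim: each hypergraph produced above must carry a $2$-collage whose weight $\sum(|F|-1)$ is small enough to close the induction. The contraction side is painless: if $x\in F$, then $\{F_i\setminus\{x\}:F_i\in\mathcal{C}\}$ is again a $2$-collage, because deleting a common vertex only shrinks the symmetric differences involved, and its weight is at least one less than that of $\mathcal{C}$ since $F$ itself lost the vertex $x$ --- exactly the saving needed to cancel the $+1$ from the exact sequence. The hard part, and what I expect to be the main obstacle, is the deletion side: once one or more vertices of $F$ have been deleted, the edge $F$ (and possibly other members of $\mathcal{C}$) is no longer an edge of the subhypergraph, so the edges it had been covering must be re-covered by edges of the subhypergraph \emph{without increasing the total weight}. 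Proving such a \emph{cheap re-covering} lemma is the crux: one must exploit that $\mathcal{H}$ is simple (no edge contains another) to keep the weight from blowing up, and must choose carefully which vertices of $F$ to contract and which to delete, tracking edge sizes throughout the non-uniform induction. Granted that lemma, the terms assemble --- the contraction-type terms contribute at most $(\text{weight}-1)+1$, the pure-deletion term at most the weight --- so the maximum is at most $\sum_{F\in\mathcal{C}}(|F|-1)$, completing the induction and hence the theorem.
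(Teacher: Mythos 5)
Your reduction to the non-uniform statement $\reg(R/I)\le\sum_{F\in\mathcal{C}}(|F|-1)$ is exactly the right first move (it is Theorem \ref{intro.thm.collage} of the paper), but the proof you sketch for that statement has a genuine gap, and you name it yourself: the ``cheap re-covering lemma'' on the deletion side is asserted, not proved, and the whole argument rests on it. It is not a routine verification. If $x$ lies in a collage edge $F$, deleting $x$ kills $F$ but not the edges $F$ was covering: an edge $E$ with $x\notin E$ and $E\setminus\{v\}\subseteq F$ survives and must be re-covered. Two such survivors need not cover one another (take $F=\{a,b,c\}$, $E=\{a,b,d\}$, $E_0=\{b,c,e\}$: neither $E\setminus\{w\}\subseteq E_0$ nor $E_0\setminus\{w\}\subseteq E$ holds for any single $w$), so you cannot simply promote one survivor to collage membership, and adding all of them blows up the weight. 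There is also a secondary issue on the contraction side: $F_i\setminus\{x\}$ need not be an edge of the contraction (it may properly contain one), and replacing it by a smaller edge it contains does not preserve the covering property, so ``$\{F_i\setminus\{x\}\}$ is again a $2$-collage'' requires more care than you give it.

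The paper avoids this difficulty entirely by changing the order of operations. Rather than inducting vertex-by-vertex on all of $\H$ and re-covering after each deletion, it first splits the edge set: for each collage edge $E_i$ let $\H_i$ consist of all edges $E$ with $E\setminus\{v\}\subseteq E_i$ for some $v$, so that $\E(\H)=\bigcup_i\E(\H_i)$ and each $\H_i$ has the \emph{single-edge} $2$-collage $\{E_i\}$. The Kalai--Meshulam theorem (\cite[Theorem 1.4]{Kalai/Meshulam:2006}, Theorem \ref{thm.KM}) gives $\reg(R/I(\H))\le\sum_i\reg(R/I(\H_i))$, so everything reduces to Lemma \ref{lem.star}: a hypergraph with a one-edge collage $\{E_0\}$ has $\reg(I)=|E_0|$. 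That lemma is proved by an induction on the number of \emph{edges} (not vertices), using the long-exact-sequence bound $\reg(I(\H))\le\max\{\reg(I(\H\setminus E)),\reg(I(\H_E))-1\}$, where $\H_E$ is built from the unions $E'\cup E$; removing an edge never creates a re-covering problem. If you want to salvage your colon-ideal induction you would need to actually prove the re-covering lemma, and the example above suggests it is false as stated; the subadditivity result of Kalai and Meshulam is the missing ingredient that makes the decomposition work.
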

Indeed, Theorem \ref{intro.thm.collage-uniform} will follow from
the following more general result:
\begin{thm}
\label{intro.thm.collage} Let $\H$ be a simple hypergraph with edge
ideal $I\subseteq R$, and let $\{E_{1},\dots,E_{c}\}$ be a 2-collage
in $\H$. Then $\reg(R/I)\leq\sum_{i=1}^{c}\left(\left|E_{i}\right|-1\right)$.
\end{thm}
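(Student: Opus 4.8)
The plan is to induct on $N:=\sum_{i=1}^{c}(|E_i|-1)$ and to reduce via the standard short exact sequence. For any vertex $x$, the sequence $0\to R/(I:x)\xrightarrow{\cdot x}R/I\to R/(I,x)\to 0$ gives $\reg(R/I)\le\max\{\reg(R/(I:x))+1,\ \reg(R/(I,x))\}$. When $I=I(\H)$, the ideal $(I,x)$ is, up to the redundant variable $x$, the edge ideal of the \emph{deletion} $\H\setminus x$ (the simple hypergraph on $V\setminus\{x\}$ whose edges are those of $\H$ missing $x$), and $(I:x)$ is likewise the edge ideal of the hypergraph $\H^{x}$ whose edges are the minimal (under inclusion) members of $\{E\in\E:x\notin E\}\cup\{E\setminus\{x\}:x\in E\}$. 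So it is enough to produce a vertex $x$ for which $\H^{x}$ has a 2-collage of total size at most $N-1$ and $\H\setminus x$ has one of total size at most $N$; the inductive hypothesis then closes the argument. The base case $N=0$ is immediate: then every $E_i$ is a single vertex, so by simplicity every edge of $\H$ is a vertex, $I$ is generated by variables, and $\reg(R/I)=0$.

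For the inductive step, assume $|E_1|\ge 2$ and take $x$ to be a vertex of $E_1$. On the colon side, the natural candidate 2-collage for $\H^{x}$ is the family obtained from $\{E_1,\dots,E_c\}$ by replacing $E_1$ --- and every other $E_i$ that contains $x$ --- with $E_i\setminus\{x\}$. A short check (each edge of $\H^{x}$ has the form $E$ or $E\setminus\{x\}$ for some $E\in\E$, and stays within distance one of the image of whichever $E_j$ covered $E$) shows this family lies within distance one of every edge of $\H^{x}$; since $E_1\setminus\{x\}$ has size $|E_1|-1$, its total size is at most $N-1$. The one technical wrinkle is that contracting $x$ may render some $E_i$ with $x\notin E_i$ non-minimal in $\H^{x}$, so it is no longer an edge; one must argue that such a member can be replaced, and the choice of $x\in E_1$ arranged, so as to keep the family a genuine 2-collage of $\H^{x}$ without increasing its total size.

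The deletion side is the heart of the matter and forces a careful choice of $x$. Simply keeping $\{E_i:x\notin E_i\}$ need not be a 2-collage of $\H\setminus x$: deleting $x$ removes not just $E_1$ but every edge of $\H$ through $x$, and the $x$-free edges whose only 2-collage partners passed through $x$ become uncovered. A careless choice can in fact be fatal for the bound --- for instance, with $E_1=\{x,a,b,c,d\}$ together with the edges $\{a,b,w\}$ and $\{b,c,d,w'\}$, the family $\{E_1\}$ is a 2-collage of total size $4$, yet $\H\setminus x$ is two edges at distance two and its smallest 2-collage has total size $5$. The way through is to choose $x\in E_1$ so that these orphaned edges --- each of which is ``almost contained'' in some collage edge through $x$ --- can be re-covered by a few edges of $\H\setminus x$ whose total size is at most $\sum_{i:x\in E_i}(|E_i|-1)$, the budget released by dropping the collage edges through $x$. (Equivalently, rather than bounding $\reg(R/I(\H\setminus x))$ directly from a 2-collage of $\H\setminus x$, one can continue the short-exact-sequence recursion through the remaining vertices of $E_1$ and verify that every hypergraph at the leaves carries a 2-collage built from shadows of $E_2,\dots,E_c$ of total size at most $N$ minus the number of colon steps taken.)

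I expect this last point --- the existence of a good vertex $x\in E_1$, equivalently the stability of the bound along the whole recursion on $E_1$ --- to be the main obstacle, precisely because 2-collages do not restrict naively under either vertex deletion or contraction. The remaining ingredients (the short exact sequence, the identification of $(I,x)$ and $(I:x)$ with the deletion and link hypergraphs, and the base case) are standard.
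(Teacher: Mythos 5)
Your proposal does not close: the deletion branch of the induction is a genuine gap, and you have in effect flagged it yourself. Your example with $E_1=\{x,a,b,c,d\}$ and the edges $\{a,b,w\}$, $\{b,c,d,w'\}$ shows that a 2-collage of total weight $N$ need not restrict or modify to a 2-collage of $\H\setminus x$ of total weight at most $N$ for an arbitrary $x\in E_1$; what you then need, and do not supply, is a proof that a \emph{good} vertex $x$ always exists (or that the leaves of the full colon/deletion recursion through $E_1$ all carry collages within budget). The colon branch has its own unresolved wrinkle, which you also note: collage members may fail to be minimal, hence fail to be edges, of the contracted hypergraph, and a 2-collage must by definition be a set of edges. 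As written, the argument establishes only the base case and a heuristic for the inductive step, with the hardest combinatorial point left as a hope.

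The paper sidesteps all of this by changing the decomposition. Rather than inducting on vertices, it covers the edge set as $\E(\H)=\bigcup_{i=1}^{c}\E(\H_{i})$, where $\H_{i}$ consists of those edges $E$ with $E\setminus\{v\}\subseteq E_{i}$ for some $v$, and invokes the subadditivity theorem of Kalai and Meshulam (Theorem \ref{thm.KM}), which gives $\reg(R/I(\H))\le\sum_{i}\reg(R/I(\H_{i}))$. This reduces the whole theorem to the case $c=1$ (Lemma \ref{lem.star}), which is handled by an exact-sequence induction on \emph{edges} rather than vertices: $\reg(I(\H))\leq\max\left\{\reg(I(\H\setminus E)),\reg(I(\H_{E}))-1\right\}$, where $\H\setminus E$ removes the single edge $E$ (so the singleton collage $\{E_{0}\}$ survives untouched) and $\H_{E}$ carries the singleton collage $\{E\cup E_{0}\}$ of size exactly $|E_{0}|+1$. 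Neither operation disturbs the collage structure, which is precisely what vertex deletion fails to guarantee. If you wish to salvage your route, the missing ingredient is either Kalai--Meshulam subadditivity (or a substitute for it) or a genuine solution to the re-covering problem on the deletion side; your own example shows the latter cannot be dispatched by naively restricting the given collage.
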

In the case where our hypergraph $\H$ is not a graph, a minimum 2-collage
in $\H$ is generally bigger than the \emph{minimax matching number}
(that is, the minimum size of a maximal matching). We shall see in
Example \ref{ex.matching} that even in the uniform case, the bound
in Theorem \ref{intro.thm.collage-uniform} is no longer true if we
replace $c$ by the minimax matching number $m$ of $\H$, and in
fact that $\reg(R/I)$ can be arbitrary larger than $(d-1)m$. If
$\H$ is a graph, the minimum size of a 2-collage is easily seen to
be the minimax matching number, so Theorem \ref{intro.thm.collage-uniform}
restricted to graphs recovers \cite[Theorem 6.7]{Ha/VanTuyl:2008}
and \cite[Theorem 11 and discussion following]{Woodroofe:2010UNPc}.

Upper bounds are most interesting when compared with lower bounds,
and while hypergraph matchings do not in general seem to give any
upper bound for regularity, a related notion will give a lower bound.
We call a collection $\{E_{1},\dots,E_{\ell}\}$ of edges in $\H$
an \textit{induced matching} if they form a matching in $\H$ (i.e.,
they are pairwise disjoint), and they are exactly the edges of the
induced subhypergraph of $\H$ over the vertices contained in $\bigcup_{i=1}^{\ell}E_{i}$.
The \textit{induced matching number} of $\H$, denoted by $\indmatch(\H)$,
is the maximum size of an induced matching in $\H$. The following
was proved in \cite[Theorem 6.5]{Ha/VanTuyl:2008} for properly connected
simple hypergraphs, and was extended to all simple hypergraphs in
\cite[Corollary 3.9]{Morey/Villarreal:2012}:
\begin{thm}
\emph{\cite[Theorem 6.5]{Ha/VanTuyl:2008}\cite[Corollary 3.9]{Morey/Villarreal:2012}}
\label{intro.thm.lb} Let $\H$ be a simple hypergraph with edge ideal
$I\subseteq R$, and let $\{E_{1},\dots,E_{\ell}\}$ be an induced
matching in $\H$. Then $\reg(R/I)\geq\sum_{i=1}^{\ell}\left(\left|E_{i}\right|-1\right)$.
\end{thm}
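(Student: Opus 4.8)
The plan is to reduce to the induced subhypergraph supported on the vertices covered by the matching, where the edge ideal becomes a complete intersection whose regularity is visible from the Koszul complex, and then to invoke the principle that regularity of a square-free monomial ideal cannot increase when one passes to an induced subhypergraph. Concretely, I would first set $W=\bigcup_{i=1}^{\ell}E_{i}$ and consider $\H_{W}$. By the definition of an induced matching the edges of $\H_{W}$ are \emph{exactly} $E_{1},\dots,E_{\ell}$, and these are pairwise disjoint, so
\[
I(\H_{W})=\bigl(x^{E_{1}},\dots,x^{E_{\ell}}\bigr)\subseteq k[W],
\]
where $x^{E_{i}}$ is the product of the variables in $E_{i}$. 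Because the $E_{i}$ are pairwise disjoint, $x^{E_{1}},\dots,x^{E_{\ell}}$ is a regular sequence, so $k[W]/I(\H_{W})$ is a complete intersection whose minimal free resolution is the Koszul complex on these $\ell$ monomials. In particular $\tor_{\ell}^{k[W]}\!\bigl(k[W]/I(\H_{W}),k\bigr)$ is one-dimensional and sits in internal degree $\sum_{i=1}^{\ell}|E_{i}|$, whence
\[
\reg\bigl(k[W]/I(\H_{W})\bigr)\ \ge\ \sum_{i=1}^{\ell}|E_{i}|-\ell\ =\ \sum_{i=1}^{\ell}\bigl(|E_{i}|-1\bigr).
\]
(Equivalently, the independence complex of $\H_{W}$ is the join $\partial E_{1}\ast\cdots\ast\partial E_{\ell}$ of boundaries of simplices, a sphere of dimension $\sum_{i}|E_{i}|-\ell-1$, which contributes this Betti number via Hochster's formula; equality in fact holds, but only the lower bound is needed.)

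The remaining, and genuinely load-bearing, step is the monotonicity $\reg\bigl(k[W]/I(\H_{W})\bigr)\le\reg(R/I)$. I would prove this with Hochster's formula. Writing $\Delta$ for the simplicial complex whose faces are the independent sets of $\H$, so that $I=I_{\Delta}$ is its Stanley--Reisner ideal, Hochster's formula writes each graded Betti number $\beta_{i,j}(R/I)$ as a sum of $\dim_{k}\tilde H_{|W'|-i-1}(\Delta_{W'};k)$ over subsets $W'\subseteq V$ with $|W'|=j$; hence $\reg(R/I)=\max\{\,|W'|-i:\tilde H_{|W'|-i-1}(\Delta_{W'};k)\ne 0\,\}$. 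Since the independence complex of $\H_{W}$ is $\Delta_{W}$, and for every $W'\subseteq W$ one has $(\Delta_{W})_{W'}=\Delta_{W'}$, the homology groups computing $\reg\bigl(k[W]/I(\H_{W})\bigr)$ form a subcollection of those computing $\reg(R/I)$, which gives the inequality. Combining the two displays yields $\reg(R/I)\ge\sum_{i=1}^{\ell}(|E_{i}|-1)$.

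I expect the restriction-monotonicity to be the only real obstacle; everything else is a direct Koszul computation. An alternative route avoiding Hochster is to delete the vertices of $V\setminus W$ one at a time and propagate regularity through the short exact sequences relating $R/I$, $R/(I:x)$ and $R/(I,x)$, but this requires the (true, yet slightly fussy) fact that $\reg$ of a monomial quotient does not increase under adding a variable to the ideal, so I would prefer the self-contained Hochster argument above.
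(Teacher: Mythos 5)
Your proposal is correct and is essentially the paper's own argument: the paper likewise restricts to the induced subhypergraph on $\bigcup_i E_i$, notes that the regularities of the disjoint edges add to give $\sum_i(|E_i|-1)$, and concludes via the monotonicity of regularity under passing to induced subhypergraphs (Lemma \ref{lem:RegShrinksInSubhypergraphs}, which rests on the same Hochster-formula characterization you use). You have simply filled in the Koszul/join-of-sphere computation and the restriction-monotonicity details that the paper cites as known.
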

For ease of comparison with the upper bound of Theorem \ref{intro.thm.collage-uniform},
we restate Theorem \ref{intro.thm.lb} in the case where $\H$ is
uniform:
\begin{thm}
\label{intro.thm.lb-uniform}Let $\H$ be a simple $d$-uniform hypergraph
with edge ideal $I\subseteq R$. Then $\reg(R/I)\ge(d-1)\indmatch(\H).$
\end{thm}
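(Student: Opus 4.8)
\emph{Proof proposal.} The plan is to deduce the statement directly from Theorem~\ref{intro.thm.lb}, of which it is essentially just the uniform specialization. First I would invoke finiteness: $\H$ has finitely many vertices, hence finitely many edges, so an induced matching of maximum size exists; I would fix such a matching $\{E_1,\dots,E_\ell\}$, so that $\ell=\indmatch(\H)$ by definition. Next I would use $d$-uniformity: every edge of $\H$ has exactly $d$ vertices, so $|E_i|-1=d-1$ for each $i$. Feeding this induced matching into Theorem~\ref{intro.thm.lb} then gives
\[
\reg(R/I)\ \ge\ \sum_{i=1}^{\ell}\bigl(|E_i|-1\bigr)\ =\ (d-1)\ell\ =\ (d-1)\indmatch(\H),
\]
which is exactly the assertion.

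The main obstacle is, frankly, that there is none: all of the work is carried out in Theorem~\ref{intro.thm.lb}, i.e.\ in \cite[Theorem~6.5]{Ha/VanTuyl:2008} together with \cite[Corollary~3.9]{Morey/Villarreal:2012}, and the present statement merely repackages that lower bound in a form parallel to the upper bound of Theorem~\ref{intro.thm.collage-uniform}. The one case worth a word is $\indmatch(\H)=0$, which happens precisely when $\H$ has no edges; then $I=(0)$, so $\reg(R/I)=0$ and the inequality reads $0\ge 0$.

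Should a genuinely self-contained argument be wanted, I would instead reprove Theorem~\ref{intro.thm.lb} in the uniform setting: take the simplicial complex $\Delta$ with $I_\Delta=I$, and use Hochster's formula to exhibit a nonvanishing reduced homology group of the restriction of $\Delta$ to $W=\bigcup_{i=1}^{\ell}E_i$ --- here $\Delta|_W$ is the join of $\ell$ copies of the boundary of a $(d-1)$-simplex, so $\tilde{H}_{\ell(d-1)-1}(\Delta|_W)\neq 0$ --- which forces a nonzero graded Betti number $\beta_{i,j}(R/I)$ with $j-i=(d-1)\ell$. But this is precisely the known proof, so I would not reproduce it.
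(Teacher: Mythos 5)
Your proposal is correct and matches the paper exactly: the paper presents Theorem~\ref{intro.thm.lb-uniform} as nothing more than a restatement of Theorem~\ref{intro.thm.lb} in the $d$-uniform case, where each $|E_i|-1=d-1$ and a maximum induced matching has $\ell=\indmatch(\H)$ edges. Your sketched self-contained alternative is also the paper's own proof of Theorem~\ref{intro.thm.lb} (restrict to the induced subhypergraph on $\bigcup E_i$, where the regularities of the disjoint edges add, then apply Lemma~\ref{lem:RegShrinksInSubhypergraphs}).
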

A second goal of this note is to describe the regularity of \emph{vertex-decomposable
}graphs, a class of graphs that has garnered considerable recent attention
\cite{VanTuyl:2009,Woodroofe:2009a}. In particular, the quotient
ring associated to the edge ideal of a vertex-decomposable graph or
hypergraph is sequentially Cohen-Macaulay \cite{Bjorner/Wachs:1997}.
We give a recursive formula to compute regularity of any vertex-decomposable
hypergraph:
\begin{thm}
\label{intro.thm.vd}Let $\H$ be a vertex-decomposable simple hypergraph
with edge ideal $I\subseteq R$, and with $v$ the initial vertex
in the shedding order. Then $\reg(R/I)=\max\left\{ \reg(I:v)+1,\reg(I,v)\right\} .$ 
\end{thm}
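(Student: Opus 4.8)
The plan is to prove the equality by establishing the two inequalities $\reg(R/I) \le \max\{\reg(I:v)+1, \reg(I,v)\}$ and $\reg(R/I) \ge \max\{\reg(I:v)+1, \reg(I,v)\}$ separately, with the first direction holding for \emph{any} square-free monomial ideal and any vertex $v$, and only the reverse requiring vertex-decomposability.

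\medskip

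\noindent\textbf{The upper bound.}
First I would recall the standard short exact sequence associated to a vertex $v$. Writing $R = k[x_1,\dots,x_n]$ and identifying $v$ with a variable $x_i$, there is a short exact sequence of graded $R$-modules
\[
0 \longrightarrow (R/(I:v))(-1) \stackrel{\cdot v}{\longrightarrow} R/I \longrightarrow R/(I,v) \longrightarrow 0,
\]
where the first map is multiplication by $v$ and the twist $(-1)$ accounts for its degree. The long exact sequence in $\tor$ (or the standard behavior of regularity along short exact sequences) then gives
\[
\reg(R/I) \le \max\left\{ \reg\big((R/(I:v))(-1)\big), \ \reg(R/(I,v)) \right\} = \max\left\{ \reg(R/(I:v))+1, \ \reg(R/(I,v)) \right\}.
\]
Finally I would note that $\reg(I:v) = \reg(R/(I:v)) + 1$ and $\reg(I,v) = \reg(R/(I,v))+1$, which is the elementary relation $\reg(J) = \reg(R/J)+1$ for any homogeneous ideal $J$; rewriting the bound in terms of ideals yields $\reg(R/I) \le \max\{\reg(I:v), \reg(I,v)\}$. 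This is slightly weaker than what is claimed, so in fact the sharpened statement requires the finer information that in one of the two ``bad'' cases the $\tor$-modules cannot both contribute at the extreme degree simultaneously --- or, more cleanly, one proves instead the inequality in the displayed form above and then converts: $\max\{\reg(R/(I:v))+1,\reg(R/(I,v))\} = \max\{\reg(I:v), \reg(I,v)\}$, so the upper bound as literally stated in the theorem (with $\reg(I:v)+1$) will follow once we also establish the lower bound and see the maximum is actually attained. I expect the cleanest route is: prove $\reg(R/I)\le \max\{\reg(R/(I:v))+1,\reg(R/(I,v))\}$ unconditionally, then prove the matching lower bound using vertex-decomposability, and observe the two expressions agree.

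\medskip

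\noindent\textbf{The lower bound via vertex-decomposability.}
Here is where the hypothesis is used. By definition of a vertex-decomposable hypergraph with shedding vertex $v$, both the deletion (corresponding to $I,v$ after relabeling) and the link (corresponding to $I:v$) are again vertex-decomposable, and moreover the shedding condition guarantees that no minimal generator of $I$ is divisible only by $v$ in a degenerate way --- concretely, the shedding property ensures $(I:v) \ne I$ in the relevant sense and that the associated simplicial complex has the right local structure. I would argue by induction on the number of vertices. The key point is to show that the short exact sequence above does \emph{not} have cancellation in $\tor$ at the top degree: that is, $\reg(R/I) \ge \reg(R/(I:v))+1$ and $\reg(R/I)\ge\reg(R/(I,v))$. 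The second inequality $\reg(R/I)\ge \reg(R/(I,v))$ is automatic since $R/(I,v)$ is a quotient of $R/I$ by the regular-on-a-complement element $v$ --- actually one uses that $\reg$ can only drop when modding out, or more precisely that $(I,v)$ restricted to the smaller polynomial ring has the same regularity and $v$ is a variable. The first inequality is the heart of the matter: one must show the copy of $\tor^R_p(k, R/(I:v))$ sitting in homological degree $p+1$ (because of the shift) in $\tor^R_{p+1}(k,R/I)$ survives --- i.e., is not killed by the connecting map to $\tor^R_{p+1}(k, R/(I,v))$.

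\medskip

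\noindent\textbf{Main obstacle.}
I expect the main difficulty to be precisely this non-cancellation claim. The clean way to handle it is a Betti-number / Hochster-formula argument: using Hochster's formula, the graded Betti numbers of $R/I$, $R/(I:v)$, and $R/(I,v)$ are computed from reduced homology of links and deletions in the independence complex $\operatorname{Ind}(\H)$, and the vertex-decomposability shedding condition is exactly what is needed to guarantee that the Mayer--Vietoris-type sequence relating these homologies splits, so that $\widetilde{H}_{i-1}(\operatorname{link}) \hookrightarrow \widetilde{H}_i(\operatorname{full complex})$ for the relevant faces. Alternatively --- and this may be the slicker write-up --- I would cite or reprove that for vertex-decomposable (hence sequentially Cohen--Macaulay) complexes the relevant splitting holds, invoking results in the spirit of \cite{Woodroofe:2009a}; the shedding vertex condition ``$\link$ and $\del$ are vertex-decomposable and no face of $\del$ is a facet of the whole complex unless\dots'' translates into the statement that the inclusion on homology is split injective. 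Once that splitting is in hand, the induction closes: by the inductive hypothesis applied to the (vertex-decomposable) link and deletion, their regularities are computed by the recursive formula, the short exact sequence gives the upper bound with no cancellation at the top, hence equality, and finally $\max\{\reg(R/(I:v))+1,\reg(R/(I,v))\}$ rewrites as $\max\{\reg(I:v)+1,\reg(I,v)\}$ using $\reg(I:v)=\reg(R/(I:v))+1$ only on the term where the maximum is realized --- more carefully, one checks both forms of the maximum coincide because whichever term dominates, the ``$+1$'' bookkeeping matches up. I would double-check the edge/base case (e.g.\ $\H$ has no edges, or a single vertex) to make sure the formula reads correctly there.
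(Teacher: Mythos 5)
Your overall architecture matches the paper's: the upper bound $\reg(R/I)\le\max\{\reg(R/(I:v))+1,\reg(R/(I,v))\}$ holds for any vertex (the paper's Lemma 4.1, proved by exactly the short-exact-sequence / Mayer--Vietoris argument you describe), the inequality $\reg(R/I)\ge\reg(R/(I,v))$ is automatic because $(I,v)$ corresponds to the induced subcomplex $\del_\Delta v$, and the entire content of the theorem is the single inequality $\reg(R/I)\ge\reg(R/(I:v))+1$. You have also correctly flagged the bookkeeping issue in the statement: the consistent form is $\reg(R/I)=\max\{\reg(R/(I:v))+1,\reg(R/(I,v))\}$, which is how the paper actually proves it, writing $\reg\Delta=\max\{\reg(\link_\Delta v)+1,\reg(\del_\Delta v)\}$.

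However, the one step that carries all the weight --- why the shedding condition forces the homology of the link to survive in the whole complex --- is asserted rather than proved. You write that the shedding condition ``is exactly what is needed to guarantee that the Mayer--Vietoris-type sequence splits'' and propose to ``cite or reprove'' this, but that claim \emph{is} the theorem, and you give no mechanism for it. The paper's mechanism is concrete: the shedding condition places the pure skeleton $\left(\link_\Gamma v\right)^{[n]}$ inside $\left(\del_\Gamma v\right)^{[n+1]}$; by Duval's theorem the latter is Cohen--Macaulay of dimension $n+1$ when $\del_\Gamma v$ is sequentially Cohen--Macaulay, so its $\tilde H_n$ vanishes; hence the inclusion-induced map $\tilde H_n(\link_\Gamma v)\to\tilde H_n(\del_\Gamma v)$ is zero and exactness of Mayer--Vietoris forces $\tilde H_{n+1}(\Gamma)\ne 0$. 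Moreover, the route you lean toward --- Hochster's formula --- runs through induced subcomplexes, and induced subcomplexes of a vertex-decomposable complex need not inherit the shedding structure; the paper explicitly remarks that the link characterization of regularity (part (3) of its Lemma 2.4) seems essential for precisely this reason. Finally, the witness for $\reg(\link_\Delta v)=d$ is not a homology class of $\link_\Delta v$ itself but a face $\sigma$ with $\tilde H_{d-1}(\link_\Delta(\sigma\cup v))\ne0$, so one must also check that $v$ remains a shedding vertex of $\link_\Delta\sigma$ and that $\link_{\Delta\setminus v}\sigma$ is still sequentially Cohen--Macaulay before applying the lifting lemma; this reduction is absent from your sketch.
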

We will actually prove a slightly more general (if somewhat technical)
result, which weakens the vertex-decomposability condition to a sequentially
Cohen-Macaulay condition on the vertex deletion subcomplex. We state
this precisely as Theorem~\ref{thm:vd.reg.generalized} below.

Note that Dao, Huneke and Schweig observed \cite{Dao/Huneke/Schweig:2013}
that $\reg(I)$ is equal to one of $\reg(I:v)+1$ or $\reg(I,v)$
for any hypergraph $\H$ and vertex $v$. Our result is that $\reg(I)$
is always the larger of the two in the case of a vertex-decomposable
hypergraph and its shedding vertex.

\medskip{}
Our third goal will be to give a new upper bound for the regularity
of any graph. Our upper bound will be based on a certain packing-type
invariant, as follows. The \emph{closed neighborhood of $x$} in a
graph $G$, denoted $N_{G}[x]$, is the subset of vertices consisting
of $x$ and all of its neighbors. A closely related notion is the
\emph{star} at $x$, which is the subgraph on $N_{G}[x]$ with edge
set consisting of all edges of $G$ incident to $x$. We say that
a star is \emph{nondegenerate} if $\deg x>1$, so that the star doesn't
consist of a single vertex or single edge.

Our upper bound will be based on packing nondegenerate stars into
$G$. We say a set of stars is \emph{center-separated} if the center
of a star and at least two of its neighbors are not contained in any
other star. After deleting the vertices of the stars in a maximal
center-separated star packing $\mathcal{P}$, an induced matching
of $G$ will remain. Let $\zeta_{\mathcal{P}}$ be the number of stars
in the packing plus the number of edges in the induced matching remainder,
and let $\zeta(G)$ be the maximum $\zeta_{\mathcal{P}}$ over all
maximal center-separated packings of nondegenerate stars. Our third
main theorem is: 
\begin{thm}
\label{intro.thml.packing} Let $G$ be a graph with edge ideal $I\subseteq R$.
Then $\reg(R/I)\leq\zeta(G)$.
\end{thm}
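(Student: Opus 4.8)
The plan is to induct on the number of vertices of $G$, combining the short exact sequence relating an edge ideal to the deletion and link of a vertex with a monotonicity property of the statistic $\zeta$. Write $I=I(G)$. For any vertex $x$ one has $I:x=J_x+I(G\setminus N_G[x])$, where $J_x$ is the ideal generated by the variables in $N_G(x)$, and $(I,x)=(x)+I(G\setminus x)$; since $x$ appears neither in $I(G\setminus N_G[x])$ nor in the generators surviving after we kill $J_x$, passing to quotients identifies $R/(I:x)$ with a polynomial extension of $k[V\setminus N_G[x]]/I(G\setminus N_G[x])$ and $R/(I,x)$ with $k[V\setminus\{x\}]/I(G\setminus x)$. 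The exact sequence $0\to(R/(I:x))(-1)\to R/I\to R/(I,x)\to 0$ then yields the well-known bound
\[
  \reg(R/I(G))\ \le\ \max\{\,\reg(R/I(G\setminus N_G[x]))+1,\ \reg(R/I(G\setminus x))\,\},
\]
which is the weak form of the Dao--Huneke--Schweig observation recalled in the introduction.

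Next I would reformulate $\zeta$ combinatorially: a maximal center-separated packing of nondegenerate stars is precisely an independent set $A\subseteq V(G)$ in which every vertex has degree $\ge 2$ while every vertex outside $A\cup N_G(A)$ has degree $\le 1$ (the elements of $A$ being the centers), and then $\zeta_A=|A|+e(G-A-N_G(A))$, where $e(\,\cdot\,)$ counts edges and is unambiguous since $G-A-N_G(A)$ has all degrees $\le 1$. In this language I would prove two facts. \emph{(a) Monotonicity of $\zeta$ under induced subgraphs}, for which it suffices to show $\zeta(G\setminus x)\le\zeta(G)$: given an optimal witness $A$ for $G\setminus x$, the set $A$ is still independent in $G$ with all degrees $\ge 2$, and the only way it can fail to be a witness for $G$ is that some neighbors of $x$ --- each necessarily of degree exactly $2$ in $G$ and with a unique other neighbor --- fall outside $A\cup N_G(A)$; adjoining a suitable independent subset of these, together with $x$ if needed, gives a witness $A'$ for $G$, and since each adjoined vertex adds $1$ to $|A'|$ and destroys at most one edge of the remainder matching, $\zeta_{A'}\ge\zeta_A$. \emph{(b)} If $x_1$ is the center of a star in a maximal center-separated packing of nondegenerate stars attaining $\zeta(G)$, then $\zeta(G\setminus N_G[x_1])+1\le\zeta(G)$: starting from an optimal witness $A'$ for $G\setminus N_G[x_1]$, the set $A'\cup\{x_1\}$ is independent in $G$ with all degrees $\ge 2$ and has the same remainder matching as $A'$, and extending it to a maximal witness $A^+$ only adjoins vertices lying outside $N_G[x_1]$ with at most one neighbor in that matching, whence $\zeta(G)\ge\zeta_{A^+}\ge\zeta_{A'}+1$.

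Now I would run the induction. If every vertex of $G$ has degree $\le 1$, then $G$ is a disjoint union of edges and isolated vertices, $R/I(G)$ is a polynomial extension of a tensor product of copies of $k[a,b]/(ab)$, so $\reg(R/I(G))$ equals the number of edges, which is exactly $\zeta(G)$. Otherwise, fix a maximal center-separated packing of nondegenerate stars $\mathcal{P}$ with $\zeta_{\mathcal{P}}=\zeta(G)$; as $G$ has a vertex of degree $\ge 2$ (the center of some nondegenerate star), $\mathcal{P}$ cannot be empty, so it contains a star with center $x_1$. Applying the short exact sequence bound at $x_1$, then the inductive hypothesis to the smaller graphs $G\setminus N_G[x_1]$ and $G\setminus x_1$, and finally facts (b) and (a),
\[
  \reg(R/I(G))\ \le\ \max\{\,\zeta(G\setminus N_G[x_1])+1,\ \zeta(G\setminus x_1)\,\}\ \le\ \zeta(G).
\]

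The step I expect to be the main obstacle is fact (a), the monotonicity of $\zeta$ under induced subgraphs. Although it is intuitively clear that deleting a vertex cannot raise this packing statistic, making it rigorous requires tracking how the remainder matching changes when the deleted vertex is restored, and in particular ruling out the scenario in which restoring one vertex forces into the packing a new star whose center has two or more remainder-matching edges in its closed neighborhood. Pinning this down should come down to a short case analysis on the degree of the restored vertex and on the adjacencies of the remainder-matching vertices.
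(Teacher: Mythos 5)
Your argument is essentially correct but follows a genuinely different route from the paper's. The paper never compares $\zeta(G)$ with $\zeta$ of subgraphs: instead it proves (Lemma \ref{lem:DegGr1Removal}) that in any graph without isolated edges there is a vertex $v$ of degree at least $2$ for which the \emph{link} branch of the recursion is the correct one, i.e.\ $\reg\Delta\leq\reg(\link_{\Delta}v)+1$; iterating this choice (setting isolated edges aside as they appear) directly constructs one particular maximal center-separated packing $\mathcal{P}$ with $\zeta_{\mathcal{P}}\geq\reg(R/I)$, and the theorem follows because $\zeta(G)$ is a \emph{maximum} over packings. You instead pick $x_{1}$ from an optimal packing, invoke the two-branch bound $\reg(R/I)\leq\max\{\reg(R/I(G\setminus N_{G}[x_{1}]))+1,\ \reg(R/I(G\setminus x_{1}))\}$, and must therefore control both branches, which is exactly what forces you to prove the monotonicity statements (a) and (b) for $\zeta$. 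The trade-off: the paper needs the nontrivial Lemma \ref{lem:DegGr1Removal} (whose proof disposes of degree-one vertices by showing their deletion leaves regularity unchanged) but no combinatorics of $\zeta$ beyond its definition, while your route uses only the standard exact-sequence inequality on the algebra side and pushes all the work into the combinatorial lemmas about $\zeta$.

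The one genuine gap is the one you flagged in (a) (the same point arises in (b)). As stated, ``each adjoined vertex destroys at most one edge of the remainder matching'' is false for the restored vertex $x$ itself: if $x$ has degree at least $2$, lies outside $N_{G}(A)$, and has $k$ neighbors that are matched in the remainder, then adjoining $x$ destroys $k$ edges while adding only $1$ to the count. The resolution is that in this situation $x$ is never \emph{forced} into the packing: each such matched neighbor $y$ of $x$ has degree exactly $2$ in $G$ and lies outside $A\cup N_{G}(A)$, so it is itself an admissible center; adjoining a maximal independent set of such neighbors covers $x$, and each adjoined $y$ removes only $\{y,x,u_{y}\}$ (with $u_{y}$ its unique other neighbor), destroying at most the single matching edge $yu_{y}$, since $u_{y}$ has degree at most $1$ in $G\setminus x$ and hence can be matched only to $y$. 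One adds $x$ itself only when every neighbor of $x$ is already covered or isolated in the remainder, in which case no matching edge is lost. With this case analysis written out, (a) and (b) hold and your induction closes; without it, the inequality $\zeta_{A'}\geq\zeta_{A}$ is unproven at exactly the point you identified.
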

It is clear that $\zeta(G)$ is at most the matching number of $G$,
so Theorem \ref{intro.thml.packing} is another generalization of
the matching upper bound of Hà and Van Tuyl. Theorem \ref{intro.thml.packing}
also improves on bounds of Moradi and Kiani \cite{Moradi/Kiani:2010}
proved with an additional assumption of vertex-decomposability and/or
shellability, as we will discuss in Remarks \ref{Rem:MoradiKianiShell}
and \ref{Rem:MoradiKianiVD}. Theorem \ref{intro.thml.packing} is
proved inductively, and the main step in the induction (Lemma \ref{lem:DegGr1Removal})
may be of independent interest.

\medskip{}

This paper is organized as follows. In Section \ref{sec:Notation-and-terminology},
we shall collect the necessary notations and terminology. The proof
of Theorem \ref{intro.thm.collage} is given in Section \ref{sec:Proofs}.
The main tool for this theorem is a result of Kalai and Meshulam \cite{Kalai/Meshulam:2006}
bounding the regularity of the sum of square-free monomial ideals.
In Section \ref{sec:VD}, we use the Stanley-Reisner face ring correspondence
and the combinatorial topology of simplicial complexes to prove Theorem~\ref{intro.thm.vd}.
Finally, in Section~\ref{sec:Packing-bound}, we prove Theorem \ref{intro.thml.packing}.

\section{\label{sec:Notation-and-terminology}Notation and terminology}

\subsection{Hypergraphs and edge ideals}

A\textit{ hypergraph} $\H$ consists of a set $V=\{x_{1},\dots,x_{n}\}$,
called \textit{vertices}; and a collection $\E$ of nonempty subsets
of $V$, called \textit{edges}. We will use $V(\H)$ and $\E(\H)$
to denote the sets of vertices and edges, respectively, of $\H$.
A hypergraph is \emph{simple} if there are no nontrivial containments
among the edges (i.e., if $E\subseteq E'$ are edges then $E=E'$).
All hypergraphs discussed in this paper will be simple. Simple hypergraphs
have been studied under several other names, including ``clutter''
and ``Sperner system''. An important family of simple hypergraphs
are \emph{$d$-uniform hypergraphs}, in which every edge contains
exactly $d$ vertices.

Let $k$ be a field, and identify the vertices in $V$ with the variables
in a polynomial ring $R=k[x_{1},\dots,x_{n}]$. The following construction
gives a one-to-one correspondence between square-free monomial ideals
in $R$ and simple hypergraphs over $V$: 
\begin{defn}
Let $\H=(V,\E)$ be a simple hypergraph. For a subset $E\subseteq V$,
let $x^{E}$ denote the monomial $\prod_{x_{i}\in E}x_{i}$. The \textit{edge
ideal} of $\H$ is the square-free monomial ideal 
\[
I(\H)=\big(x^{E}\,\big|\, E\in\E\big)\subseteq R.
\]

\end{defn}
Certain substructures of a hypergraph will be important to us. If
$\H$ is a hypergraph with an edge $E$, then $\H\setminus E$ will
denote the hypergraph obtained from $\H$ by removing $E$ from the
edge set. The \emph{induced subhypergraph} of $\H$ on a subset $W$
of the vertex set is the hypergraph over vertex set $W$ with edge
set consisting of all edges of $\H$ that are contained in $W$.

\subsection{Simplicial complexes and vertex-decomposability}

The edge ideal $I(\H)$ is a square-free monomial ideal, so it can
also be viewed as the Stanley-Reisner ideal of a simplicial complex
as follows: 
\begin{defn}
We call a collection $\mathcal{B}$ of the vertices of hypergraph
$\H$ an \textit{independent set} if there is no edge $E$ in $\H$
such that $E\subseteq\mathcal{B}$. The \textit{independence complex}
of $\H$, denoted by $\Delta(\H)$, is the simplicial complex whose
faces consist of all independent sets in $\H$. 
\end{defn}
It is immediate from the definitions that if $I_{\Delta}$ denotes
the Stanley-Reisner ideal of $\Delta$, then $I(\H)=I_{\Delta(\H)}.$

If $v$ is a vertex of the simplicial complex $\Delta$, then the
\emph{deletion} of $v$ from \emph{$\Delta$}, denoted by $\del_{\Delta}(v)$,
is the simplicial complex over the vertex set $V\setminus\{v\}$ with
faces $\{\sigma\,|\,\sigma\in\Delta,v\notin\sigma\}$. An \emph{induced
subcomplex} of $\Delta$ is obtained by (successively) deleting a
set of vertices. The \emph{link }of $v$ in\emph{ $\Delta$}, denoted
by $\link_{\Delta}v$, is the subcomplex of $\del_{\Delta}v$ with
faces $\{\sigma\,|\,\sigma\in\del_{\Delta}v,v\cup\sigma\in\Delta\}$.

Algebraically, we have $I_{\del_{\Delta}v}=(I_{\Delta},v)$, while
$I_{\link_{\Delta}v}=(I_{\Delta}:v,v)$. In particular, since $v$
does not appear in any monomial in $I:v$, we will see that $\reg(I_{\link_{\Delta}v})=\reg(I:v)$.
\medskip{}

A simplicial complex $\Delta$ is recursively defined to be \emph{vertex-decomposable}
if either 
\begin{itemize}
\item [(a)]$\Delta$ is a simplex, or 
\item [(b)]there exists a vertex $v$ such that both $\del_{\Delta}(v)$
and $\link_{\Delta}(v)$ are vertex-decomposable, and the facets of
$\del_{\Delta}(v)$ are facets of $\Delta$. 
\end{itemize}
A vertex satisfying the condition in (b) is called a \emph{shedding
vertex}, and the recursive choice of vertices is called a \emph{shedding
order}. When it causes no confusion, we will call a simple hypergraph
$\H$ \textit{vertex-decomposable} if its independence complex $\Delta(\H)$
is vertex-decomposable. \medskip{}

A complex is \emph{shellable} if there is an ordering of its facets
obeying certain restrictions, the precise details of which will not
be important for us. It is well-known that 
\[
\Delta\mbox{ vertex-decomposable}\implies\Delta\mbox{ shellable}\implies\Delta\mbox{ sequentially Cohen-Macaulay.}
\]
 For additional background on the combinatorics of simplicial complexes,
including vertex-decomposability and shellability, we refer to e.g.
\cite{Bjorner:1995,Kozlov:1997}; for background on the connection
with commutative algebra, we refer to \cite{Miller/Sturmfels:2005,Stanley:1996}.

\subsection{Regularity}

Recall that the \emph{Castelnuovo-Mumford regularity} (or just \emph{regularity)}
of an $R$-module $M$ can be defined as 
\[
\reg(M)=\max_{i}\{\max\{j\,|\,\tor_{i}^{R}(M,k)_{j}\not=0\}-i\}.
\]
For an overview of and background on Castelnuovo-Mumford regularity,
we refer to the recent survey article \cite{Chardin:2007}.

We observe that for $R$ a polynomial ring, $\reg(I)=\reg(R/I)+1$;
thus, it is equivalent to study the regularity of the edge ideal or
the corresponding quotient ring. Our notation is a bit careless about
what polynomial ring we are working over: this is justified, as if
$S$ is any polynomial ring over $k$ containing $R$ (with additional
variables not appearing in $I$), then $\reg(R/I)=\reg(S/I)$. Of
our main theorems, only Theorem \ref{thm:vd.reg.generalized} depends
on the choice of the field $k$, and this only insofar as the sequentially
Cohen-Macaulay property may depend on $k$.

All simplicial homology will be taken over the same coefficient field
$k$ as $R$, and we suppress the field from our notation. By the
Universal Coefficient Theorem, the simplicial homology $\tilde{H}_{i}(\Delta;k)$
and cohomology $\tilde{H}^{i}(\Delta;k)$ are isomorphic when $k$
is a field, so we can work with whichever is more convenient.

\medskip{}

In Section \ref{sec:VD}, we will find it helpful to work with regularity
through independence complexes and the Stanley-Reisner correspondence.
We summarize the connection:
\begin{lem}
\emph{\label{lem:RegFromTopology}}For a simplicial complex $\Delta$,
the following are equivalent:
\begin{enumerate}
\item $\reg(R/I_{\Delta})\geq d$.
\item $\tilde{H}_{d-1}(\Delta[S])\neq0$ for some $S\subseteq V$, where
$\Delta[S]$ denotes the induced subcomplex on $S$.
\item $\tilde{H}_{d-1}(\link_{\Delta}\sigma)\neq0$ for some face $\sigma$
of $\Delta$.
\end{enumerate}
\end{lem}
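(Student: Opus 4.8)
The plan is to derive all three descriptions from Hochster's formula, which comes in two flavours --- one computing graded Betti numbers, one computing local cohomology.

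For $(1)\Leftrightarrow(2)$, I would invoke Hochster's formula for the graded Betti numbers of the Stanley--Reisner ring:
\[
\dim_k \tor_i^R(R/I_\Delta, k)_j \;=\; \sum_{\substack{W \subseteq V\\ |W| = j}} \dim_k \tilde H_{j-i-1}(\Delta[W]; k).
\]
By definition $\reg(R/I_\Delta)$ is the largest value of $j-i$ for which some $\tor_i^R(R/I_\Delta,k)_j$ is nonzero, so the displayed identity identifies $\reg(R/I_\Delta)$ with the largest $d$ for which some induced subcomplex $\Delta[W]$ has $\tilde H_{d-1}(\Delta[W]; k)\neq 0$ (one checks that such a $W$ has $|W|-d\geq 0$, so the homological index $i=|W|-d$ is legitimate). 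Taking $S=W$ gives the equivalence of $(1)$ and $(2)$.

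For $(1)\Leftrightarrow(3)$, I would use instead the local cohomology form of Hochster's formula, together with the standard fact that $\reg(R/I_\Delta)=\max_i\{\,i+a_i\,\}$, where $a_i=\sup\{\,\ell : H^i_{\mathfrak m}(R/I_\Delta)_\ell\neq 0\,\}$ (and $a_i=-\infty$ if that module vanishes). Hochster's formula says $H^i_{\mathfrak m}(R/I_\Delta)_{\mathbf a}$ is nonzero only when the negative support $\sigma:=\{\,t:a_t<0\,\}$ is a face of $\Delta$ and $\mathbf a$ is nonpositive with vanishing entries off $\sigma$, in which case its $k$-dimension is $\dim_k\tilde H_{i-|\sigma|-1}(\link_\Delta\sigma;k)$. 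For a fixed face $\sigma$ the largest coarse degree $\ell=\sum_t a_t$ that occurs is $\ell=-|\sigma|$ (attained when every negative entry equals $-1$), so
\[
a_i \;=\; \max\bigl\{\,-|\sigma| \;:\; \sigma\in\Delta,\ \tilde H_{i-|\sigma|-1}(\link_\Delta\sigma;k)\neq 0\,\bigr\},
\]
whence $\reg(R/I_\Delta)=\max\{\,i-|\sigma| : \sigma\in\Delta,\ \tilde H_{i-|\sigma|-1}(\link_\Delta\sigma;k)\neq 0\,\}$. Setting $d=i-|\sigma|$ converts the homological index into $d-1$ and yields the equivalence of $(1)$ and $(3)$; since $(2)$ and $(3)$ are each equivalent to $(1)$, the three conditions are equivalent.

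I do not anticipate a genuine obstacle: the lemma is bookkeeping with well-known tools. The one point needing care is the index matching in the local cohomology form of Hochster's formula --- that the reduced homology of $\link_\Delta\sigma$ feeding into $H^i_{\mathfrak m}$ sits in degree $i-|\sigma|-1$, and that passing from the fine $\mathbb Z^n$-grading to the coarse $\mathbb Z$-grading costs exactly $|\sigma|$ --- together with recalling that regularity can be read off from the local cohomology modules at all. (Alternatively one could combine Hochster's Betti-number formula with combinatorial Alexander duality and Terai's theorem relating $\reg I_\Delta$ to the projective dimension of $R/I_{\Delta^\vee}$, but that route is more roundabout, so I would present the argument above.)
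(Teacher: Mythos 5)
Your proposal is correct and follows essentially the same route as the paper's own sketch: the equivalence of (1) and (2) via the Betti-number characterization of regularity plus Hochster's formula for graded Betti numbers, and the equivalence of (1) and (3) via the local cohomology characterization plus the isomorphism $H^{i}_{\mathfrak m}(R/I_\Delta)_{-\sigma}\cong\tilde H^{i-|\sigma|-1}(\link_\Delta\sigma)$. Your index bookkeeping matches the paper's, so no further comment is needed.
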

\noindent We briefly sketch a proof: The equivalence of (1) and (2)
follows directly from the Betti number characterization of regularity
\cite{Chardin:2007}, together with Hochster's formula (as stated
in \cite[Corollary 5.12]{Miller/Sturmfels:2005}). The equivalence
of (1) and (3) follows directly from the local cohomology characterization
of regularity \cite{Chardin:2007}, together with the fact that $H_{\mathfrak{m}}^{i}(R/I_{\Delta})_{-\sigma}\cong\tilde{H}^{i-\left|\sigma\right|-1}(\link_{\Delta}\sigma)$
\cite[Chapter 13.2]{Miller/Sturmfels:2005}. 

Kalai and Meshulam \cite[Proposition 3.1]{Kalai/Meshulam:2006} have
also given a direct proof of the equivalence of (2) and (3).
\begin{rem}
In some sources in the topological combinatorics literature \cite{Alon/Kalai/Matousek/Meshulam:2002,Kalai/Meshulam:2006},
complexes $\Delta$ with $\reg(R/I_{\Delta})\geq d$ are called \emph{$d$-Leray},
and $\reg(R/I_{\Delta})$ is referred to as the \emph{Leray number}
of $\Delta$.
\end{rem}
The following well-known lemma follows directly from characterization
(2) of Lemma \ref{lem:RegFromTopology}, and tells us that regularity
may be regarded as giving a measure of the complexity of $\H$.
\begin{lem}
\label{lem:RegShrinksInSubhypergraphs} Let $\H$ be a simple hypergraph.
Then $\reg I(\H)\geq\reg I(\H')$ for any induced subhypergraph $\H'$
of $\H$. 
\end{lem}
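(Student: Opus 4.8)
The plan is to pass to independence complexes and invoke characterization (2) of Lemma~\ref{lem:RegFromTopology}. The first thing I would check is that if $\H'$ is the induced subhypergraph of $\H$ on a vertex subset $W \subseteq V$, then the independence complex $\Delta(\H')$ is exactly the induced subcomplex $\Delta(\H)[W]$. This is immediate from the definitions: a subset $\sigma \subseteq W$ contains an edge of $\H'$ if and only if it contains an edge of $\H$, since any edge of $\H$ contained in $\sigma$ is in particular contained in $W$ and hence is by definition an edge of $\H'$. Thus the faces of $\Delta(\H')$ are precisely the faces of $\Delta(\H)$ using no vertex outside $W$, which is the defining property of $\Delta(\H)[W]$.

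Next I would observe that an induced subcomplex of an induced subcomplex is again an induced subcomplex of the original: successively deleting vertices $U$ from $\Delta(\H)[W]$ produces $\Delta(\H)[W \setminus U]$. Consequently, every induced subcomplex of $\Delta(\H')$ is also an induced subcomplex of $\Delta(\H)$. With this in hand, set $d = \reg(R/I(\H'))$. Applying Lemma~\ref{lem:RegFromTopology} to $\Delta(\H')$ yields a subset $S$ of the vertices of $\H'$ with $\tilde{H}_{d-1}(\Delta(\H')[S]) \neq 0$; since $\Delta(\H')[S]$ is an induced subcomplex of $\Delta(\H)$, applying Lemma~\ref{lem:RegFromTopology} to $\Delta(\H)$ gives $\reg(R/I(\H)) \geq d = \reg(R/I(\H'))$. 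Adding $1$ to both sides and using $\reg I = \reg(R/I)+1$ completes the argument.

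There is no real obstacle here; the only minor bookkeeping point is that $I(\H')$ naturally lives in the polynomial ring on the variables $W$ rather than on all of $V$, but as noted in the discussion of regularity above, regularity is unchanged under adjoining extra variables not appearing in the ideal, so this causes no difficulty. The entire content of the lemma is the identification $\Delta(\H') = \Delta(\H)[W]$, after which it is a formal consequence of Lemma~\ref{lem:RegFromTopology}.
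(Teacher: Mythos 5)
Your proposal is correct and matches the paper's (implicit) argument: the paper simply notes that the lemma ``follows directly from characterization (2) of Lemma~\ref{lem:RegFromTopology},'' which is exactly your route via the identification $\Delta(\H')=\Delta(\H)[W]$ and the fact that induced subcomplexes of induced subcomplexes are induced subcomplexes. You have merely written out the details the paper leaves to the reader.
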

A similar result holds for links:
\begin{lem}
\label{lem:RegShrinksInLinks} Let $\Delta$ be a simplicial complex.
Then $\reg(R/I_{\Delta})\geq\reg(R/I_{\link_{\Delta}\sigma})$ for
any face $\sigma$ of $\Delta$. 
\end{lem}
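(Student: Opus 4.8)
The plan is to reduce the claim entirely to characterization (3) of Lemma~\ref{lem:RegFromTopology}, exploiting the transitivity of taking links. Set $d=\reg(R/I_{\link_{\Delta}\sigma})$. Applying Lemma~\ref{lem:RegFromTopology} with the complex $\link_{\Delta}\sigma$ playing the role of $\Delta$, there is a face $\tau$ of $\link_{\Delta}\sigma$ with $\tilde H_{d-1}\bigl(\link_{\link_{\Delta}\sigma}\tau\bigr)\neq 0$.

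Next I would invoke the standard identity $\link_{\link_{\Delta}\sigma}\tau=\link_{\Delta}(\sigma\cup\tau)$. This is immediate from the definitions: a set $\rho$ disjoint from $\sigma\cup\tau$ lies in the left-hand complex exactly when $\rho\cup\tau\in\link_{\Delta}\sigma$, which happens exactly when $\rho\cup\tau\cup\sigma\in\Delta$, i.e.\ when $\rho\in\link_{\Delta}(\sigma\cup\tau)$. Observe also that since $\tau$ is a face of $\link_{\Delta}\sigma$ we have $\sigma\cup\tau\in\Delta$, so $\sigma\cup\tau$ is a legitimate face of $\Delta$.

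Combining these observations, $\tilde H_{d-1}\bigl(\link_{\Delta}(\sigma\cup\tau)\bigr)\neq 0$ for the face $\sigma\cup\tau$ of $\Delta$, and so characterization~(3) of Lemma~\ref{lem:RegFromTopology}, applied this time to $\Delta$ itself, gives $\reg(R/I_{\Delta})\geq d=\reg(R/I_{\link_{\Delta}\sigma})$. The only content here is the link-transitivity identity, which is routine; the single point requiring care is that the face $\tau$ produced for $\link_{\Delta}\sigma$ yields a genuine face $\sigma\cup\tau$ of $\Delta$, and this holds automatically. No field-dependence or further commutative algebra enters beyond what Lemma~\ref{lem:RegFromTopology} already packages. (Alternatively, one could deduce the statement from Lemma~\ref{lem:RegShrinksInSubhypergraphs} after recalling that $\link_{\Delta}\sigma$ is, up to irrelevant variables, the independence complex of an induced subhypergraph-type construction, but the direct argument via Lemma~\ref{lem:RegFromTopology}(3) is cleaner.)
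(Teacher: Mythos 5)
Your proof is correct and matches the route the paper intends: the paper states this lemma without proof as the ``link'' analogue of Lemma~\ref{lem:RegShrinksInSubhypergraphs}, implicitly relying on exactly your argument, namely characterization~(3) of Lemma~\ref{lem:RegFromTopology} combined with the transitivity identity $\link_{\link_{\Delta}\sigma}\tau=\link_{\Delta}(\sigma\cup\tau)$. Your explicit verification of that identity and of the fact that $\sigma\cup\tau$ is a face of $\Delta$ is exactly the right amount of care.
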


\section{\label{sec:Proofs} Regularity and collages}

In this section, we prove Theorem \ref{intro.thm.collage}, bounding
the regularity of a square-free monomial ideal with a collage. For
completeness, and because the proof is short, we begin by proving
the bound from below of Theorem \ref{intro.thm.lb}:
\begin{proof}[Proof of Theorem \ref{intro.thm.lb}]
 Let $\{E_{1},\dots,E_{\ell}\}$ be an induced matching in $\H$
and let $\H'$ be the induced subhypergraph of $\H$ over the vertices
contained in $\bigcup_{i=1}^{\ell}E_{i}$. Since the $E_{i}$'s are
pairwise disjoint, the regularities add, so we have $\reg(R/I(\H'))=\sum_{i=1}^{\ell}\left(\left|E_{i}\right|-1\right)$,
and the result follows by Lemma \ref{lem:RegShrinksInSubhypergraphs}. 
\end{proof}
In our proof of Theorem \ref{intro.thm.collage}, we shall make use
of the following theorem of Kalai and Meshulam.
\begin{thm}
\cite[Theorem 1.4]{Kalai/Meshulam:2006} \label{thm.KM} Let $\H$
and $\H_{1},\dots,\H_{s}$ be simple hypergraphs over the same vertex
set $V$ such that $\E(\H)=\bigcup_{i=1}^{s}\E(\H_{i})$. Then 
\[
\reg(R/I(\H))\le\sum_{i=1}^{s}\reg(R/I(\H_{i})).
\]
\end{thm}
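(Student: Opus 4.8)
The plan is to first reduce to a purely topological statement about induced subcomplexes. Since the minimal generators of $I(\H)$ are precisely the union of those of the $I(\H_i)$, we have $I(\H)=\sum_{i=1}^{s}I(\H_i)$. A vertex set is independent in $\H$ if and only if it is independent in every $\H_i$, so for the independence complexes $\Delta_i:=\Delta(\H_i)$ we get $\Delta(\H)=\Delta_1\cap\cdots\cap\Delta_s$. An easy induction on $s$ (grouping $\H_2,\dots,\H_s$ into the simple hypergraph whose edge ideal is $\sum_{i\ge 2}I(\H_i)$) reduces the claim to the case $s=2$, so it suffices to prove
\[
\reg(R/I_{\Delta_1\cap\Delta_2})\le \reg(R/I_{\Delta_1})+\reg(R/I_{\Delta_2})
\]
for any two complexes $\Delta_1,\Delta_2$ on a common vertex set $V$. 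Throughout I would read $\reg$ through characterization (2) of Lemma \ref{lem:RegFromTopology}, that is, as the Leray number measuring the top nonvanishing reduced homology over all induced subcomplexes.

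I would prove this by induction on $|V|$, the base case $|V|\le 1$ being immediate since then every $\reg(R/I_\Delta)=0$. Write $d_i=\reg(R/I_{\Delta_i})$ and suppose, for contradiction, that $\tilde{H}_{j}\big((\Delta_1\cap\Delta_2)[S]\big)\ne 0$ for some $S\subseteq V$ and some $j\ge d_1+d_2+1$. First I would arrange $S=V$: for a proper subset $S\subsetneq V$ one has $(\Delta_1\cap\Delta_2)[S]=\Delta_1[S]\cap\Delta_2[S]$ with $\reg(R/I_{\Delta_i[S]})\le d_i$ by Lemma \ref{lem:RegShrinksInSubhypergraphs}, so the inductive hypothesis on the smaller vertex set $S$ already forbids nonvanishing homology in degree $j\ge d_1+d_2+1$. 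Hence any offending class must occur at $S=V$.

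The core step uses the Mayer--Vietoris sequence for the decomposition of $\Delta:=\Delta_1\cap\Delta_2$ into its deletion $\del_{\Delta}v$ and its (contractible) closed star at a vertex $v$, whose intersection is $\link_{\Delta}v$. This yields the exact sequence $\tilde{H}_{j}(\del_{\Delta}v)\to \tilde{H}_{j}(\Delta)\to \tilde{H}_{j-1}(\link_{\Delta}v)$. Since $\del_{\Delta}v=(\Delta_1\cap\Delta_2)[V\setminus v]$ is an induced subcomplex on fewer vertices, the case $S\subsetneq V$ already gives $\tilde{H}_j(\del_{\Delta}v)=0$, so the nonzero class in $\tilde{H}_j(\Delta)$ injects into $\tilde{H}_{j-1}(\link_{\Delta}v)$. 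Using $\link_{\Delta}v=\link_{\Delta_1}v\cap\link_{\Delta_2}v$, this says $\tilde{H}_{j-1}\big(\link_{\Delta_1}v\cap\link_{\Delta_2}v\big)\ne 0$. But the two links live on fewer than $|V|$ vertices and satisfy $\reg(R/I_{\link_{\Delta_i}v})\le d_i$ by Lemma \ref{lem:RegShrinksInLinks}, so the inductive hypothesis gives $\reg(R/I_{\link_{\Delta_1}v\cap\link_{\Delta_2}v})\le d_1+d_2$; since $j-1\ge d_1+d_2$, this forces $\tilde{H}_{j-1}(\cdots)=0$, the desired contradiction.

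The step I expect to be most delicate is the degree bookkeeping, where a naive application of the deletion--link bound $\reg(R/I_\Delta)\le\max\{\reg(R/I_{\del_\Delta v}),\,\reg(R/I_{\link_\Delta v})+1\}$ appears to lose one unit. The contradiction setup avoids this: at the critical degree $j\ge d_1+d_2+1$ one has $j-1\ge d_1+d_2$, which is exactly the range in which the inductive bound on the link intersection forces vanishing, so no unit is actually lost. Making this inequality close, together with checking that both the deletion and the link strictly drop the vertex count so that the induction applies, is the main thing to handle carefully; the remaining ingredients --- the identity $\Delta(\H)=\bigcap_i\Delta(\H_i)$, induced-subcomplex and link monotonicity of $\reg$, and the Mayer--Vietoris sequence --- are standard.
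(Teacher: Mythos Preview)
The paper does not itself prove Theorem~\ref{thm.KM}; it is quoted from \cite{Kalai/Meshulam:2006}, with only the remark that Kalai and Meshulam argue topologically via Lemma~\ref{lem:RegFromTopology} and that Herzog later gave an algebraic extension. So there is no in-paper argument to compare against line by line, but your proposal has a genuine gap.

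The gap is precisely the off-by-one you flag as ``delicate,'' and the contradiction framing does not rescue it. Under Lemma~\ref{lem:RegFromTopology}(2), proving $\reg(R/I_{\Delta_1\cap\Delta_2})\le d_1+d_2$ means ruling out $\tilde H_j\big((\Delta_1\cap\Delta_2)[S]\big)\ne 0$ for every $j\ge d_1+d_2$; the critical degree is therefore $j=d_1+d_2$, not $j\ge d_1+d_2+1$ as you assume. At $j=d_1+d_2$ the link step fails: induction together with Lemma~\ref{lem:RegShrinksInLinks} yields only $\reg\big(R/I_{\link_{\Delta_1}v\,\cap\,\link_{\Delta_2}v}\big)\le d_1+d_2$, which forces $\tilde H_m=0$ for $m\ge d_1+d_2$ but says nothing about $m=j-1=d_1+d_2-1$. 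In other words, your Mayer--Vietoris reduction is exactly the ``naive'' bound $\reg\Delta\le\max\{\reg(\del_\Delta v),\,\reg(\link_\Delta v)+1\}$ applied to $\Delta=\Delta_1\cap\Delta_2$, and it establishes only $\reg(R/I_{\Delta_1\cap\Delta_2})\le d_1+d_2+1$; iterating to $s$ summands would lose $s-1$ units. Closing that last unit is the actual content of the Kalai--Meshulam result, and their proof requires an additional combinatorial-topological ingredient (a projection lemma for $d$-Leray complexes applied after passing to the join $\Delta_1*\Delta_2$) that is not captured by a single deletion--link step.
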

\begin{rem}
Kalai and Meshulam gave a topological proof of Theorem \ref{thm.KM}
via the correspondence in Lemma \ref{lem:RegFromTopology}. Theorem
\ref{thm.KM} was later extended to arbitrary (not necessarily square-free)
monomial ideals by Herzog \cite{Herzog:2007}, who used algebraic
techniques.
\end{rem}
We will need a technical lemma. If $E$ is an edge of $\H$, then
let $\H_{E}$ be the hypergraph whose edge set consists of the minimal
(under inclusions) members of $\{E'\cup E\,:\, E'\neq E\mbox{ is an edge of }\H\}$. 
\begin{lem}
\label{lem.edgebyedge} Let $\H$ be a hypergraph with at least two
edges, $E$ be an edge of $\H$, and $\H_{E}$ be as in the preceding
paragraph. Then $\reg(I(\H))\leq\max\left\{ \reg(I(\H\setminus E),\reg(I(\H_{E}))-1\right\} $.
\end{lem}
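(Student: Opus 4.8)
The plan is to relate the edge ideal $I(\H)$ to the two ideals $I(\H \setminus E)$ and $I(\H_E)$ via the colon/sum decomposition at the monomial $x^E$, and then apply the Kalai--Meshulam bound (Theorem~\ref{thm.KM}) to the second piece. First I would observe that, writing $m = x^E$ for the generator of $I(\H)$ corresponding to $E$, there is a short exact sequence of $R$-modules
\[
0 \longrightarrow \frac{R}{(I(\H\setminus E) : m)}(-\deg m) \longrightarrow \frac{R}{I(\H\setminus E)} \longrightarrow \frac{R}{I(\H)} \longrightarrow 0,
\]
since $I(\H) = I(\H\setminus E) + (m)$. The standard regularity estimate for short exact sequences then gives
\[
\reg\bigl(R/I(\H)\bigr) \le \max\Bigl\{\reg\bigl(R/I(\H\setminus E)\bigr),\ \reg\bigl(R/(I(\H\setminus E):m)\bigr) + \deg m - 1\Bigr\}.
\]
Translating back via $\reg(I) = \reg(R/I) + 1$, it suffices to show $\reg\bigl(R/(I(\H\setminus E):m)\bigr) + \deg m \le \reg(I(\H_E))$, i.e.\ that $\reg(I(\H_E)) \ge \reg\bigl((I(\H\setminus E):x^E)\bigr) + |E|$ — or something close to this after bookkeeping the $+1$'s carefully.

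The heart of the matter is therefore to understand the colon ideal $I(\H\setminus E) : x^E$. Here I would compute: for an edge $E' \ne E$ of $\H$ (equivalently, an edge of $\H\setminus E$, using that $\H$ is simple so $E' \not\subseteq E$), the colon $(x^{E'}) : x^E$ is generated by $x^{E' \setminus E}$, the monomial on the vertices of $E'$ not already in $E$. Thus $I(\H\setminus E):x^E$ is the square-free monomial ideal generated by the monomials $x^{E'\setminus E}$ over edges $E' \ne E$; after taking minimal generators this is the edge ideal of the simple hypergraph $\H'$ whose edges are the minimal members of $\{E' \setminus E : E' \ne E \text{ an edge of }\H\}$ (on the vertex set $V \setminus E$). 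Now I compare this with $\H_E$, whose edges are the minimal members of $\{E' \cup E : E' \ne E\}$: each edge $E'\cup E$ of $\H_E$ is obtained from the corresponding edge $E'\setminus E$ of $\H'$ by adding back all $|E|$ vertices of $E$ — that is, $I(\H_E) = x^E \cdot \bigl(I(\H\setminus E):x^E\bigr)$ up to passing to minimal generators (one must check the "minimal members of" operations on the two sides match up, which they do because adding a fixed set of vertices to everything preserves the inclusion order). Multiplying an ideal by the degree-$|E|$ square-free monomial $x^E$ in a polynomial ring where the $E$-variables don't otherwise occur simply shifts Betti numbers by $|E|$ in internal degree without changing homological degree, so $\reg(I(\H_E)) = \reg(I(\H\setminus E):x^E) + |E| = \reg\bigl(R/(I(\H\setminus E):x^E)\bigr) + |E| + 1$.

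Combining the displayed inequality with this identity gives
\[
\reg\bigl(R/I(\H)\bigr) \le \max\Bigl\{\reg\bigl(R/I(\H\setminus E)\bigr),\ \reg(I(\H_E)) - 1\Bigr\},
\]
which, after adding $1$ to both sides, is exactly $\reg(I(\H)) \le \max\{\reg(I(\H\setminus E)), \reg(I(\H_E)) - 1\}$, as claimed; the hypothesis that $\H$ has at least two edges guarantees $\H_E$ has an edge and $\H\setminus E$ is nonempty, so both quantities make sense. The step I expect to be the main obstacle is the careful verification that $I(\H_E)$ really equals $x^E \cdot (I(\H\setminus E):x^E)$ as ideals after minimalizing generators — one has to be sure that the two "take minimal elements" operations are compatible, i.e.\ that $E_1' \setminus E \subseteq E_2' \setminus E$ if and only if $E_1' \cup E \subseteq E_2' \cup E$, and that no new containments or coincidences among generators are introduced by either operation. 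This is a purely combinatorial check about subsets, but it must be done cleanly since the whole degree-counting argument hinges on it; everything else (the short exact sequence, the regularity-of-exact-sequences lemma, the degree shift under multiplication by a square-free monomial in fresh variables) is standard.
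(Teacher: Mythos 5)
Your argument is correct and is exactly the ``easy long exact sequence argument'' that the paper defers to \cite[Theorem 6.2]{Ha/VanTuyl:2008}: the short exact sequence coming from $I(\H)=I(\H\setminus E)+(x^E)$, the identification $I(\H_E)=x^E\cdot\bigl(I(\H\setminus E):x^E\bigr)$, and the degree shift $\reg(mJ)=\reg(J)+\deg m$ (which holds simply because $R$ is a domain, with no need for the variables of $E$ to be ``fresh''). The only cosmetic issue is that your opening sentence promises an application of the Kalai--Meshulam bound that never actually occurs in the proof.
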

Lemma \ref{lem.edgebyedge} follows from a long exact sequence argument,
arising from the fact that $I(\H_{E})=(x^{E})\cap I(\H\setminus E)$
and from the short exact sequence $(x^{E})\cap I(H\setminus E)\rightarrow(x^{E})\oplus I(\H\setminus E)\rightarrow I(\H)$.
(More details can be found in \cite[Theorem 6.2]{Ha/VanTuyl:2008}.)

We are now ready to prove Theorem \ref{intro.thm.collage}. We begin
with the case where $c=1$: 
\begin{lem}
\label{lem.star} If $\H=(V,\E)$ is a hypergraph such that $\{E_{0}\}$
is a 2-collage for $\H$, then $\reg(I(\H))=\vert E_{0}\vert$.\end{lem}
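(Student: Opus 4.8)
The plan is to prove $\reg(I(\H))\ge|E_0|$ and $\reg(I(\H))\le|E_0|$ separately. The lower bound is immediate: since $\H$ is simple, the only edge of $\H$ contained in $E_0$ is $E_0$ itself, so $\{E_0\}$ is an induced matching and Theorem~\ref{intro.thm.lb} gives $\reg(R/I(\H))\ge|E_0|-1$.

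For the upper bound $\reg(R/I(\H))\le|E_0|-1$ I would induct on $|E_0|$. When $|E_0|=1$, simplicity forces every edge to be a single vertex, so $I(\H)$ is generated by variables and $\reg(R/I(\H))=0$. For the inductive step, fix a vertex $v\in E_0$ and use the short exact sequence $0\to(R/(I(\H):v))(-1)\to R/I(\H)\to R/(I(\H),v)\to0$ (the first map being multiplication by $v$), which gives
\[
\reg(R/I(\H))\le\max\{\reg(R/(I(\H):v))+1,\ \reg(R/(I(\H),v))\}.
\]
Now $I(\H):v$ is the edge ideal of the hypergraph $\H'$ whose independence complex is $\link_{\Delta(\H)}v$; explicitly, $\E(\H')$ consists of the minimal members of $\{E\in\E(\H):v\notin E\}\cup\{E\setminus\{v\}:E\in\E(\H),\ v\in E\}$. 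Since $E_0$ is an edge through $v$, simplicity shows that $E_0\setminus\{v\}$ is one of those minimal members and that every edge of $\H'$ has at most one vertex outside $E_0\setminus\{v\}$; thus $\{E_0\setminus\{v\}\}$ is a $2$-collage for $\H'$. As $|E_0\setminus\{v\}|=|E_0|-1$, the inductive hypothesis gives $\reg(R/I(\H'))\le|E_0|-2$, so the first term on the right is at most $|E_0|-1$.

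The second term is $\reg(R/I(\H''))$, where $\H''$ is $\H$ with every edge through $v$ deleted. Here is the crux: since $v\in E_0$, the set $E_0$ is no longer an edge of $\H''$ --- it is only a vertex set $W:=E_0\setminus\{v\}$ with $|E\setminus W|\le1$ for every edge $E$ of $\H''$ --- and the bound $\reg(R/I(\H''))\le|W|$ is genuinely false once one drops the requirement that $W$ be an edge (a disjoint union of edges, where $\reg$ equals the number of edges, is the standard obstruction). I would therefore prove beforehand the auxiliary statement: \emph{if $\mathcal G$ is a simple hypergraph and $W$ a set of vertices with $|E\setminus W|\le1$ for all $E\in\E(\mathcal G)$, then $\reg(R/I(\mathcal G))\le|W|$}. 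This follows by induction on $|W|$ with the same short exact sequence, split now on a vertex $v\in W$ (if $W=\emptyset$ every edge is a singleton, so $\reg=0$): one checks that both $I(\mathcal G):v$ and $(I(\mathcal G),v)$ are edge ideals of hypergraphs all of whose edges have at most one vertex outside $W\setminus\{v\}$, so both terms on the right are $\le|W|$ by induction. Applying this auxiliary lemma to $\H''$ with $W=E_0\setminus\{v\}$ gives $\reg(R/I(\H''))\le|E_0|-1$, and combining the two terms closes the induction.

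The main obstacle is recognizing the need for this separate ``every edge close to $W$'' lemma: deleting a vertex of $E_0$ pushes us out of the regime where a single edge is a $2$-collage, and in general no edge of $\H''$ plays that role. Everything else is routine bookkeeping with the edge sets of $\link_{\Delta(\H)}v$ and of a vertex deletion, both of which follow directly from simplicity.
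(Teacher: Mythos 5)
Your proof is correct, but it follows a genuinely different route from the paper's. The paper inducts on the \emph{number of edges}: for any edge $E\neq E_{0}$ it applies Lemma~\ref{lem.edgebyedge}, reducing to $\H\setminus E$ (where $\{E_{0}\}$ is still a 2-collage) and to $\H_{E}$ (where $\{E\cup E_{0}\}$ is a 2-collage of size $\left|E_{0}\right|+1$, offsetting the $-1$ in that lemma); the point of that decomposition is that the invariant ``a single edge forms a 2-collage'' is preserved on both branches, so no auxiliary statement is needed. You instead induct on $\left|E_{0}\right|$ by splitting on a vertex $v\in E_{0}$ via the sequence $0\to(R/(I:v))(-1)\to R/I\to R/(I,v)\to0$, and you correctly identify the resulting obstacle: on the deletion branch $E_{0}$ ceases to be an edge, so the sharp bound $\reg(R/I)\le\left|E_{0}\right|-1$ is no longer available and must be replaced by the weaker domination-type statement $\reg(R/I(\mathcal{G}))\le\left|W\right|$ whenever every edge has at most one vertex outside $W$. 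Your bookkeeping checks out (in particular, simplicity does guarantee that $E_{0}\setminus\{v\}$ survives as an edge of the colon hypergraph, and the degenerate case where $(I:w)=R$ is harmless), and the lower bound via Theorem~\ref{intro.thm.lb} is fine, though it also follows immediately from $x^{E_{0}}$ being a minimal generator of degree $\left|E_{0}\right|$. The paper's argument is shorter because its invariant is self-propagating; yours costs an extra lemma but that lemma is slightly more general than what the theorem needs (it bounds regularity by the size of any vertex set that all edges nearly lie in, edge or not) and could conceivably be reused elsewhere.
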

\begin{proof}
We proceed by induction on the number of edges in $\mathcal{H}$.
If $\mathcal{H}$ has a single edge, then $\reg(I(\H))=\reg(x^{E_{0}})=\vert E_{0}\vert$,
as desired. Otherwise, let $E\neq E_{0}$ be any other edge of $\H$,
and apply Lemma \ref{lem.edgebyedge} to give that $\reg(I(\H))\leq\max\left\{ \reg(I(\H\setminus E),\reg(I(\H_{E})-1\right\} $.
But then $\{E\cup E_{0}\}$ is a $2$-collage for $\H_{E}$, and the
result follows from induction together with the observation that $\left|E\cup E_{0}\right|=\left|E\right|+1$.
\end{proof}

\begin{proof}[Proof of Theorem \ref{intro.thm.collage}]
 Let $\{E_{1},\dots,E_{c}\}$ be a 2-collage, and for each $i$ let
$\H_{i}$ be the simple hypergraph consisting of all $E$ with $E\setminus\{v\}\subseteq E_{i}$
(as in the definition of 2-collage). By construction we have that
$\E(\H)=\bigcup_{i=1}^{\ell}\E(\mathcal{H}_{i})$ and that each $\mathcal{H}_{i}$
meets the conditions of Lemma \ref{lem.star}. The result now follows
from Theorem \ref{thm.KM}. 
\end{proof}
Recall that a \textit{matching} in a simple hypergraph $\H$ is a
collection of pairwise disjoint edges, and the \textit{matching number}
of $\H$ (denoted by $\nu(\H)$) is the maximum size of any matching
in $\H$. We similarly denote the \emph{minimax matching number} (the
minimum size of a maximal matching) as $\mmmatch(\H)$. In \cite{Ha/VanTuyl:2008},
it was essentially shown that if $G$ is a graph then 
\[
\reg(R/I(G))\le\mmmatch(G).
\]
In the next example, we shall see that the analogous bound of $(d-1)\nu_{\min}(\H)$
no longer holds for hypergraphs, thus that the 2-collage in the statement
of Theorem \ref{intro.thm.collage} cannot be replaced by a matching
(even in the uniform case). 

\begin{figure}
\includegraphics{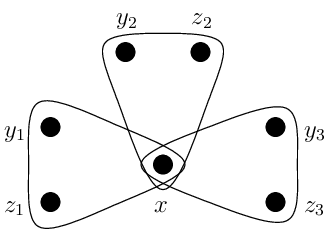}

\caption{\label{fig:MatchingBdFailure}A 3-uniform hypergraph with regularity
greater than $(3-1)\cdot\nu$.}
\end{figure}

\begin{example}
\label{ex.matching}For $s>1$, consider the hypergraph $\H_{s}$
on the vertex set $\{x,y_{1},\dots,y_{s},z_{1},\dots,z_{s}\}$ with
edges $\{x,y_{i},z_{i}\}$ (for $i=1,\dots,s$). Figure~\ref{fig:MatchingBdFailure}
illustrates $\H_{3}$. We have that the matching number and minimax
matching number of $\H_{s}$ are both 1. On the other hand, it is
straightforward to compute that $\reg(R/I(\H_{s}))=s+1$, which can
be taken to be arbitrarily far from $(3-1)\nu(\H_{s})=(3-1)\mmmatch(\H_{s})=2$. 
\end{example}
It is easy to see that the bounds in Theorems and \ref{intro.thm.collage}
and \ref{intro.thm.lb} are not sharp. Indeed \cite{Woodroofe:2010UNPc}
observes that the disjoint union of cyclic graphs can give arbitrarily
large differences between the induced matching number, regularity,
and matching number.

\medskip{}
 We close this section by stating an equivalent form to Theorem \ref{intro.thm.collage}
in somewhat different language. The definition of matching essentially
calls for a set of edges that are as separated as possible. A notion
of separation that allows us to interpolate between a hypergraph matching
and an arbitrary set of edges is as follows:
\begin{defn}
Let $t$ be a positive integer. Two distinct edges $E$ and $F$ are
said to be \emph{$t$-separated} if either $|E\setminus F|\ge t$
or $|F\setminus E|\ge t$. 
\end{defn}
Thus, a matching in a graph is a collection of pairwise $2$-separated
edges; more generally a matching in any $d$-uniform hypergraph is
a collection of pairwise $d$-separated edges. It is immediate from
definition that a maximal collection of pairwise $2$-separated edges
forms a 2-collage.
\begin{cor}
Let $\H$ be a simple hypergraph with edge ideal $I\subseteq R$,
and let $\{E_{1},\dots,E_{c}\}$ be a maximal collection of pairwise
$2$-separated edges in $\H$. Then $\reg(R/I)\leq\sum_{i=1}^{c}\left(\left|E_{i}\right|-1\right)$.
\end{cor}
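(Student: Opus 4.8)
The plan is to deduce this corollary directly from Theorem~\ref{intro.thm.collage}: once we know that a maximal collection of pairwise $2$-separated edges is a $2$-collage in the sense of the introduction, Theorem~\ref{intro.thm.collage} applies verbatim and yields $\reg(R/I)\le\sum_{i=1}^{c}(|E_i|-1)$. So the entire content of the proof is the definitional observation already flagged in the paragraph preceding the statement, and I would simply write that observation out carefully.

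Concretely, I would set $\mathcal{C}=\{E_1,\dots,E_c\}$ and check the $2$-collage condition one edge at a time. Fix $E\in\E(\H)$. If $E\in\mathcal{C}$, then since edges are nonempty we may pick any $v\in E$, and $E\setminus\{v\}\subseteq E\in\mathcal{C}$, so the condition holds. If $E\notin\mathcal{C}$, then by maximality of $\mathcal{C}$ the enlarged collection $\mathcal{C}\cup\{E\}$ fails to be pairwise $2$-separated, so there is an index $i$ for which $E$ and $E_i$ are not $2$-separated; by the definition of $2$-separation this means $|E\setminus E_i|\le 1$ and $|E_i\setminus E|\le 1$. Because $\H$ is simple and $E\ne E_i$, we cannot have $E\subseteq E_i$, so in fact $|E\setminus E_i|=1$; writing $\{v\}=E\setminus E_i$ gives $E\setminus\{v\}\subseteq E_i\in\mathcal{C}$. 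Hence $\mathcal{C}$ is a $2$-collage, and Theorem~\ref{intro.thm.collage} completes the argument.

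I do not expect any real obstacle here — this is a corollary precisely because the combinatorial work is already encapsulated in Theorem~\ref{intro.thm.collage}, and what remains is just unwinding the notion of $2$-separation. The only mildly delicate point is the invocation of simplicity to exclude $E\subseteq E_i$, which is what guarantees that deleting a single vertex of $E$ suffices; even without it the $2$-collage condition would still hold (with $E\setminus\{v\}=E$ when $E$ lies inside a larger edge), but simplicity makes the bookkeeping cleanest, so I would state it that way.
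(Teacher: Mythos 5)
Your proposal is correct and matches the paper's route exactly: the paper treats the identification of maximal pairwise $2$-separated collections with $2$-collages as immediate from the definitions and then cites Theorem~\ref{intro.thm.collage}, which is precisely the verification you spell out. Your careful handling of the case $E\notin\mathcal{C}$ (using maximality and simplicity to extract the single vertex $v$ with $E\setminus\{v\}\subseteq E_i$) is a correct and slightly more explicit version of what the paper leaves to the reader.
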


\section{\label{sec:VD} Regularity in a vertex-decomposable simplicial complex}

In this section, we will prove Theorem \ref{intro.thm.vd}, and stronger
versions thereof. Our main tool will be the combinatorial topology
of simplicial complexes, together with the characterization of regularity
from Lemma \ref{lem:RegFromTopology}. Throughout this section and
the next we will freely abuse notation to write $\reg\Delta$ for
$\reg(R/I_{\Delta})$.\medskip{}

As mentioned previously, a Mayer-Vietoris argument yields:
\begin{lem}
\label{lem:RegAlternative} \cite[Lemma 2.10]{Dao/Huneke/Schweig:2013}
If $\H$ is a simple hypergraph with edge ideal $I\subseteq R$, and
$v$ is any vertex of $\H$, then $\reg(I)$ is either $\reg(I:x)+1$
or $\reg(I,x)$.
\end{lem}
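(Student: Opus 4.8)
Following the Mayer--Vietoris strategy indicated just above the statement, the plan is to pass to the Stanley--Reisner side and use Lemma~\ref{lem:RegFromTopology}. Write $\Delta=\Delta(\H)$, so $I=I_\Delta$, and recall the standard identifications $\reg(R/(I,v))=\reg(R/I_{\del_\Delta v})$ and $\reg(R/(I:v))=\reg(R/I_{\link_\Delta v})$ (the link and deletion taken over $V\setminus\{v\}$; regularity is insensitive to the ambient ring). Put $a=\reg(R/I_{\link_\Delta v})+1$ and $b=\reg(R/I_{\del_\Delta v})$. Since $\reg(J)=\reg(R/J)+1$ for an ideal $J$, the claim ``$\reg(I)$ is $\reg(I:v)+1$ or $\reg(I,v)$'' is equivalent to $\reg(R/I_\Delta)\in\{a,b\}$. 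Degenerate cases (e.g.\ $\{v\}\notin\Delta$, in which case $\link_\Delta v$ is void and $a=-\infty$, or $\Delta$ void) are immediate from the convention $\reg(0)=-\infty$, so we may assume $v$ is a genuine vertex of $\Delta$.

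First I would prove the upper bound $\reg(R/I_\Delta)\le\max\{a,b\}$. If $W\subseteq V$ with $v\notin W$, then $\Delta[W]=(\del_\Delta v)[W]$, so any nonzero $\tilde H_\ell(\Delta[W])$ forces $\ell+1\le b$. If $v\in W$, set $W'=W\setminus\{v\}$ and decompose $\Delta[W]=\Delta[W']\cup\bigl(\{v\}*(\link_\Delta v)[W']\bigr)$; the second piece is a cone, hence acyclic, the intersection of the two pieces is $(\link_\Delta v)[W']$, and $\Delta[W']=(\del_\Delta v)[W']$. The Mayer--Vietoris sequence becomes
\[
\cdots\to\tilde H_\ell\bigl((\del_\Delta v)[W']\bigr)\to\tilde H_\ell(\Delta[W])\xrightarrow{\ \partial\ }\tilde H_{\ell-1}\bigl((\link_\Delta v)[W']\bigr)\xrightarrow{\ j\ }\tilde H_{\ell-1}\bigl((\del_\Delta v)[W']\bigr)\to\cdots,
\]
where $j$ is induced by $\link_\Delta v\subseteq\del_\Delta v$. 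Hence $\tilde H_\ell(\Delta[W])\ne0$ forces $\tilde H_\ell\bigl((\del_\Delta v)[W']\bigr)\ne0$ or $\tilde H_{\ell-1}\bigl((\link_\Delta v)[W']\bigr)\ne0$, i.e.\ $\ell+1\le b$ or $\ell+1\le a$. Taking the maximum over all $W,\ell$ and applying Lemma~\ref{lem:RegFromTopology} gives $\reg(R/I_\Delta)\le\max\{a,b\}$. The matching lower bound $\reg(R/I_\Delta)\ge b$ is immediate because $\del_\Delta v$ is an induced subcomplex of $\Delta$ (Lemma~\ref{lem:RegShrinksInSubhypergraphs}).

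A short case check finishes all but one situation: if $a\le b+1$, then $b\le\reg(R/I_\Delta)\le\max\{a,b\}$ already forces $\reg(R/I_\Delta)\in\{a,b\}$. The remaining case is $a\ge b+2$, i.e.\ $\reg(R/I_{\link_\Delta v})\ge\reg(R/I_{\del_\Delta v})+1$, where I must sharpen the bound to $\reg(R/I_\Delta)\ge a$. Let $s=\reg(R/I_{\link_\Delta v})=a-1$. Since $\reg(R/I_{\del_\Delta v})\le s-1$, Lemma~\ref{lem:RegFromTopology} gives $\tilde H_{s-1}\bigl((\del_\Delta v)[U]\bigr)=0$ for every vertex subset $U$. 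Choose $W'\subseteq V\setminus\{v\}$ with $\tilde H_{s-1}\bigl((\link_\Delta v)[W']\bigr)\ne0$ and feed $W=W'\cup\{v\}$ into the sequence above in degree $\ell=s$: the target of $j$ is $\tilde H_{s-1}\bigl((\del_\Delta v)[W']\bigr)=0$, so $\partial$ is surjective onto a nonzero group, whence $\tilde H_s(\Delta[W])\ne0$ and $\reg(R/I_\Delta)\ge s+1=a$. Together with the upper bound this yields $\reg(R/I_\Delta)=a$, completing the proof.

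I expect this last step to be the main obstacle. The naive combination of the Mayer--Vietoris upper bound with the ``regularity decreases under induced subcomplexes'' lower bound leaves a genuine gap precisely when the link is much more singular than the deletion, and closing it requires the extra observation that in that regime the deletion carries no reduced homology in the critical degree $s-1$, which is exactly what lets the connecting map detect the homology of $\Delta[W]$. The only other points needing care are routine: the identifications of $(I:v)$ and $(I,v)$ with the link and deletion, the acyclicity of the cone $\{v\}*(\link_\Delta v)[W']$, and the identification of the pieces and their intersection in the Mayer--Vietoris decomposition.
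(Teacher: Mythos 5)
Your proof is correct and is precisely the ``straightforward Mayer--Vietoris argument'' the paper alludes to just before the statement (the paper itself gives no proof, only the citation to Dao--Huneke--Schweig): you decompose $\Delta[W]$ into the deletion and the closed star, use acyclicity of the cone, and handle the one nontrivial case $\reg(\link_\Delta v)\geq\reg(\del_\Delta v)+1$ by observing that the target of the map $j$ vanishes in the critical degree, so the connecting homomorphism detects the homology. No gaps; the identifications of $(I,v)$ and $(I:v)$ with the deletion and link are the standard ones.
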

Restated in terms of the independence complex $\Delta$ of $\H$,
Lemma \ref{lem:RegAlternative} says that either $\reg\Delta=\reg(\link_{\Delta}v)+1$
or else $\reg\Delta=\reg(\del_{\Delta}v)$. This is particularly intuitive
from a geometric perspective, where it essentially says that either
$v$ is contained in an (induced) homology cycle of highest possible
dimension, or else that some such homology cycle avoids $v$.

It follows immediately that 
\begin{equation}
\reg\Delta\leq\max\{\reg(\link_{\Delta}v)+1,\reg(\del_{\Delta}v)\}.\label{eq:RegRecursiveMax}
\end{equation}
 It is clear from Lemma \ref{lem:RegFromTopology} part (2) that $\reg(\del_{\Delta}v)\leq\reg\Delta$,
hence, if the max of (\ref{eq:RegRecursiveMax}) is obtained on $\reg(\del_{\Delta}v)$,
then $\reg\Delta=\reg(\del_{\Delta}v)$. On the other hand, while
Lemma \ref{lem:RegShrinksInLinks} gives that $\reg\Delta$ may not
be smaller than $\reg(\link_{\Delta}v)$, taking $\Delta$ to be a
cone with apex $v$ over any complex shows that $\reg\Delta$ may
equal $\reg(\link_{\Delta}v)$, hence may be strictly less than $\reg(\link_{\Delta}v)+1$.
\medskip{}

Theorem \ref{intro.thm.vd} gives a concrete set of circumstances
which guarantee that a homology $n$-cycle in $\link_{\Delta}v$ lifts
to a homology $(n+1)$-cycle in $\Delta$. We will prove the following
generalization:
\begin{thm}
\label{thm:vd.reg.generalized}If $v$ is a shedding vertex for a
simplicial complex $\Delta$ such that $\Delta\setminus v$ is sequentially
Cohen-Macaulay, then $\reg\Delta=\max\{\reg(\Delta\setminus v),\reg(\link_{\Delta}v)+1\}$.
\end{thm}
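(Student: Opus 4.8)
The plan is to leverage the Dao--Huneke--Schweig dichotomy (Lemma~\ref{lem:RegAlternative}) together with the shedding condition and the sequential Cohen--Macaulayness of $\Delta\setminus v$. By \eqref{eq:RegRecursiveMax} and the discussion following it, we always have $\reg\Delta\le\max\{\reg(\Delta\setminus v),\reg(\link_\Delta v)+1\}$, and moreover $\reg(\Delta\setminus v)\le\reg\Delta$, so the only thing to prove is the reverse inequality $\reg\Delta\ge\reg(\link_\Delta v)+1$ whenever $\reg(\link_\Delta v)+1$ is the larger quantity. Equivalently, setting $d=\reg(\link_\Delta v)$, I must show that a nonvanishing reduced homology class in dimension $d-1$ of some link $\link_{\link_\Delta v}\sigma$ (which exists by Lemma~\ref{lem:RegFromTopology}(3) applied to $\link_\Delta v$) lifts to a nonvanishing class in dimension $d$ of some link in $\Delta$. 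Since $\link_{\link_\Delta v}\sigma=\link_\Delta(\sigma\cup\{v\})$, the natural candidate is to show $\tilde H_d\bigl(\link_\Delta\sigma\bigr)\ne 0$: the vertex $v$ should serve as a ``suspension coordinate'' that raises the dimension of the cycle by one.

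The key step is therefore a Mayer--Vietoris / cone argument inside $\link_\Delta\sigma$. Write $\Gamma=\link_\Delta\sigma$; then $\link_\Gamma v=\link_\Delta(\sigma\cup\{v\})$ carries a nonzero class in $\tilde H_{d-1}$, and I decompose $\Gamma=(\del_\Gamma v)\cup(\overline{\mathrm{star}}_\Gamma v)$ with intersection $\link_\Gamma v$. The second piece is a cone, hence acyclic, so the Mayer--Vietoris sequence gives
\[
\tilde H_d(\Gamma)\longleftarrow \tilde H_{d-1}(\link_\Gamma v)\longleftarrow \tilde H_{d-1}(\del_\Gamma v)\oplus \tilde H_{d-1}(\overline{\mathrm{star}}_\Gamma v),
\]
and the connecting map into $\tilde H_d(\Gamma)$ has image isomorphic to $\tilde H_{d-1}(\link_\Gamma v)/\,\mathrm{im}\,\tilde H_{d-1}(\del_\Gamma v)$. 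So it suffices to choose $\sigma$ so that some class in $\tilde H_{d-1}(\link_\Gamma v)$ does \emph{not} come from $\tilde H_{d-1}(\del_\Gamma v)=\tilde H_{d-1}(\link_{\Delta\setminus v}\sigma)$. Here the two hypotheses enter: the shedding condition tells us the facets of $\Delta\setminus v$ are facets of $\Delta$, which controls how $\del_\Gamma v$ sits inside $\Gamma$ and ensures $\del_\Gamma v$ is not a cone on $v$ in disguise; and the sequential Cohen--Macaulayness of $\Delta\setminus v$ forces $\reg(\Delta\setminus v)$ to be realized in a controlled way — in particular, because $\Delta\setminus v$ is sequentially Cohen--Macaulay, so is every link $\link_{\Delta\setminus v}\tau$, and for such complexes $\reg$ equals the dimension of the top nonvanishing homology of the complex itself (not of a proper link). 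This rigidity is what lets me conclude that if $\reg(\Delta\setminus v)<d$ then $\tilde H_{d-1}(\link_{\Delta\setminus v}\sigma)=0$ for the relevant $\sigma$, killing the obstruction term.

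Concretely, the steps in order: (1) reduce, via Lemma~\ref{lem:RegAlternative} and the inequality $\reg(\Delta\setminus v)\le\reg\Delta$, to proving $\reg\Delta\ge\reg(\link_\Delta v)+1$ in the case $\reg(\link_\Delta v)+1>\reg(\Delta\setminus v)$; (2) pick a face $\sigma$ (not containing $v$) with $\tilde H_{d-1}(\link_\Delta(\sigma\cup\{v\}))\ne 0$ realizing $d=\reg(\link_\Delta v)$, using Lemma~\ref{lem:RegFromTopology}(3); (3) observe $\link_\Delta(\sigma\cup\{v\})=\link_{\del_\Delta v}(\sigma\cup\{v\})$ would be empty or low-dimensional, so recast the $d-1$ homology as living in $\link_\Gamma v$ for $\Gamma=\link_\Delta\sigma$; (4) run Mayer--Vietoris on $\Gamma=(\del_\Gamma v)\cup\overline{\mathrm{star}}_\Gamma v$; (5) use sequential Cohen--Macaulayness of $\Delta\setminus v$ (inherited by $\del_\Gamma v=\link_{\Delta\setminus v}\sigma$, by a standard result that links of sequentially CM complexes are sequentially CM) together with the strict inequality $\reg(\Delta\setminus v)<d$ to conclude $\tilde H_{d-1}(\del_\Gamma v)=0$; (6) conclude the connecting homomorphism is onto $\tilde H_d(\Gamma)$ with the whole of $\tilde H_{d-1}(\link_\Gamma v)\ne 0$ mapping in, hence $\tilde H_d(\link_\Delta\sigma)\ne 0$, giving $\reg\Delta\ge d+1$. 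The main obstacle I anticipate is step~(5): making precise and citing the right form of ``$\reg$ of a sequentially Cohen--Macaulay complex is not witnessed by a proper link,'' and checking carefully that the shedding hypothesis is exactly what prevents the pathological case where $\del_\Gamma v$ is itself a cone with apex in the link (which would let its homology absorb the class). Handling the possibility that $\sigma$ must be enlarged — i.e. that the witnessing face for $\reg(\link_\Delta v)$ forces a nonminimal $\sigma$ and one must also track $\link$ relations inside $\del_\Gamma v$ — is the delicate bookkeeping I expect to occupy most of the write-up.
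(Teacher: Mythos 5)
Your overall framework is the same as the paper's: pick a face $\sigma$ with $\tilde H_{d-1}(\link_{\Delta}(\sigma\cup v))\neq 0$ via Lemma~\ref{lem:RegFromTopology}(3), pass to $\Gamma=\link_{\Delta}\sigma$, and run Mayer--Vietoris on $\Gamma=(\del_{\Gamma}v)\cup(v*\link_{\Gamma}v)$ to lift the class to $\tilde H_{d}(\Gamma)$. But your step (5) has a genuine gap, and it sits exactly at the heart of the theorem. You propose to kill the obstruction by showing $\tilde H_{d-1}(\del_{\Gamma}v)=\tilde H_{d-1}(\link_{\Delta\setminus v}\sigma)=0$, deduced from the \emph{strict} inequality $\reg(\Delta\setminus v)<d$. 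However, the case reduction in your step (1) only gives $\reg(\Delta\setminus v)\le d$ (you need $\reg(\link_{\Delta}v)+1>\reg(\Delta\setminus v)$, i.e.\ $\reg(\Delta\setminus v)\le d$, not $\le d-1$). When $\reg(\Delta\setminus v)<d$ your argument does go through --- but then it uses neither shedding nor sequential Cohen--Macaulayness, and indeed that case already follows from Lemmas~\ref{lem:RegAlternative} and~\ref{lem:RegShrinksInLinks} alone. The entire content of the theorem is the boundary case $\reg(\Delta\setminus v)=d$, where $\tilde H_{d-1}(\link_{\Delta\setminus v}\sigma)$ may well be nonzero and your vanishing claim fails. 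The ``rigidity'' statement you invoke to rescue this --- that for a sequentially Cohen--Macaulay complex the regularity is witnessed by the homology of the complex itself rather than a proper link --- is false: a cone over any complex of positive regularity is a counterexample (coning preserves regularity but kills all homology, and a cone over a Cohen--Macaulay complex is Cohen--Macaulay).

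The paper's proof repairs exactly this point. One does not need the whole group $\tilde H_{d-1}(\del_{\Gamma}v)$ to vanish; one only needs the map $f\colon\tilde H_{d-1}(\link_{\Gamma}v)\to\tilde H_{d-1}(\del_{\Gamma}v)$ induced by inclusion to be zero. The shedding condition guarantees that every $(d-1)$-face of $\link_{\Gamma}v$ lies in a $d$-face of $\del_{\Gamma}v$, so the inclusion of the pure $(d-1)$-skeleton of $\link_{\Gamma}v$ (where all the relevant cycles live) factors through the pure $d$-skeleton $\Gamma_{0}=(\del_{\Gamma}v)^{[d]}$. By Duval's theorem, sequential Cohen--Macaulayness of $\del_{\Gamma}v=\link_{\Delta\setminus v}\sigma$ makes $\Gamma_{0}$ Cohen--Macaulay of dimension $d$, hence $\tilde H_{d-1}(\Gamma_{0})=0$, so $f=0$ even when $\tilde H_{d-1}(\del_{\Gamma}v)\neq 0$. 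This is the missing idea: your instinct that shedding ``controls how $\del_{\Gamma}v$ sits inside $\Gamma$'' is the right one, but the precise mechanism is the factorization through the pure skeleton, not a vanishing of the target group.
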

We will prove Theorem \ref{thm:vd.reg.generalized} via several lemmas.
\begin{lem}
If $\sigma$ is a face and $v$ a shedding vertex of a simplicial
complex $\Delta$ such that $v\notin\sigma$, then $v$ is a shedding
vertex for $\link_{\Delta}\sigma$.\end{lem}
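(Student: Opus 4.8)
The plan is to reduce everything to the combinatorial heart of the shedding condition. Recall that the defining property of a shedding vertex $v$ of $\Delta$ that we shall use is that every facet of $\del_\Delta(v)$ is also a facet of $\Delta$. Writing $\Gamma=\link_\Delta\sigma$, our task is thus to show that every facet of $\del_\Gamma(v)$ is a facet of $\Gamma$. If $v$ is not a vertex of $\Gamma$ (i.e.\ $\sigma\cup\{v\}\notin\Delta$) then $\del_\Gamma(v)=\Gamma$ and there is nothing to prove, so we may assume $v\in V(\Gamma)$. The first thing I would record is the standard commutation identity: since $v\notin\sigma$, a face $\tau$ belongs to $\del_{\link_\Delta\sigma}(v)$ iff $\tau\cap\sigma=\emptyset$, $\tau\cup\sigma\in\Delta$ and $v\notin\tau$, which is exactly the condition for $\tau\in\link_{\del_\Delta(v)}(\sigma)$; hence $\del_\Gamma(v)=\link_{\del_\Delta(v)}(\sigma)$.

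Now take a facet $F$ of $\del_\Gamma(v)=\link_{\del_\Delta(v)}(\sigma)$. The main step is to check that $F\cup\sigma$ is a \emph{facet} of $\del_\Delta(v)$: it certainly lies in $\del_\Delta(v)$, and if some $G\in\del_\Delta(v)$ properly contained $F\cup\sigma$, then $G\supseteq\sigma$, the set $G\setminus\sigma$ is disjoint from $\sigma$ and avoids $v$, and $G\setminus\sigma\supsetneq F$, so $G\setminus\sigma$ would be a face of $\link_{\del_\Delta(v)}(\sigma)$ strictly larger than $F$, contradicting maximality. By the shedding hypothesis, $F\cup\sigma$ is then a facet of $\Delta$. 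Finally I would deduce that $F$ is a facet of $\Gamma$: if $H\in\Gamma$ properly contained $F$, then $H\cup\sigma\in\Delta$ and $H\cup\sigma\supsetneq F\cup\sigma$ (both $H$ and $F$ being disjoint from $\sigma$), contradicting that $F\cup\sigma$ is a facet of $\Delta$. Hence every facet of $\del_\Gamma(v)$ is a facet of $\Gamma$, which is precisely the assertion that $v$ is a shedding vertex of $\link_\Delta\sigma$.

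I do not expect a genuine obstacle here --- the argument is bookkeeping with the link/deletion calculus. The only points needing a little care are the verification of the commutation identity $\del_{\link_\Delta\sigma}(v)=\link_{\del_\Delta(v)}(\sigma)$ and, in the two maximality arguments, making sure that the enlarging faces $G\setminus\sigma$ and $H$ genuinely sit inside the relevant subcomplexes (disjoint from $\sigma$, not containing $v$) so that the contradictions are legitimate; isolating the degenerate case $\sigma\cup\{v\}\notin\Delta$ at the outset removes any remaining fuss.
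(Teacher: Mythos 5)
Your proof is correct and takes essentially the same route as the paper, which disposes of this lemma with the one-line remark that it is ``immediate from definition, since any face containing $\sigma\cup v$ satisfies the shedding vertex property.'' You have simply supplied the details that the paper compresses: the identity $\del_{\link_{\Delta}\sigma}(v)=\link_{\del_{\Delta}(v)}(\sigma)$ and the two maximality checks showing that the facet condition ``every facet of $\del(v)$ is a facet of the complex'' passes to $\link_{\Delta}\sigma$.
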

\begin{proof}
Immediate from definition: if $\tau$ is a face containing $\sigma\cup v$,
then (since $v$ is a shedding vertex) there is a vertex $w$ such
that $(\tau\setminus v)\cup w$ is a face. As $\sigma\subseteq(\tau\setminus v)\cup w$,
we obtain the shedding vertex condition for $v$ in $\link_{\Delta}\sigma$.
\end{proof}
We denote by $\Gamma^{[n]}$ the \emph{pure $n$-skeleton} of a simplicial
complex $\Gamma$, consisting of all faces contained in a face of
dimension $n$. 

The following lemma will be the core of the proof of Theorem \ref{thm:vd.reg.generalized}. 
\begin{lem}
\label{lem:Lift}Let $\Gamma$ be a simplicial complex, and suppose
that that $\tilde{H}_{n}(\link_{\Gamma}v)\neq0$. If $\left(\link_{\Gamma}v\right)^{[n]}$
is contained in a subcomplex $\Gamma_{0}$ of $\del_{\Gamma}v$ with
$\tilde{H}_{n}(\Gamma_{0})=0$, then $\tilde{H}_{n+1}(\Gamma)\neq0$.\end{lem}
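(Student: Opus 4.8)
The plan is to run a Mayer--Vietoris argument on the decomposition of $\Gamma$ coming from the shedding-type picture already in play: write $\Gamma$ as the union of the closed star of $v$ and the deletion $\del_\Gamma v$. The closed star $\overline{\operatorname{st}}_\Gamma v$ is a cone with apex $v$, hence contractible, and its intersection with $\del_\Gamma v$ is exactly $\link_\Gamma v$. The relevant Mayer--Vietoris segment then reads
\[
\tilde H_{n+1}(\Gamma)\longrightarrow \tilde H_n(\link_\Gamma v)\longrightarrow \tilde H_n(\overline{\operatorname{st}}_\Gamma v)\oplus \tilde H_n(\del_\Gamma v),
\]
and since the star term vanishes, exactness tells us that a class in $\tilde H_n(\link_\Gamma v)$ lifts to $\tilde H_{n+1}(\Gamma)$ precisely when it dies in $\tilde H_n(\del_\Gamma v)$. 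So the whole lemma reduces to producing a nonzero class of $\tilde H_n(\link_\Gamma v)$ that maps to zero under the inclusion-induced map $\tilde H_n(\link_\Gamma v)\to \tilde H_n(\del_\Gamma v)$.

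To get such a class, first I would pass to the pure $n$-skeleton. A nonzero element of $\tilde H_n(\link_\Gamma v)$ is represented by an $n$-cycle, every simplex of which is an $n$-face of $\link_\Gamma v$; hence that cycle already lives in $(\link_\Gamma v)^{[n]}$, and so it determines a nonzero class in $\tilde H_n\big((\link_\Gamma v)^{[n]}\big)$ as well (a cycle supported on $n$-faces is a boundary in the skeleton iff it is a boundary in the whole complex, since $n$-boundaries only involve $(n+1)$-faces, none of which the skeleton adds or removes relative to the cycle group in degree $n$). Now factor the inclusion $\link_\Gamma v \hookrightarrow \del_\Gamma v$ through the hypothesized subcomplex:
\[
(\link_\Gamma v)^{[n]} \hookrightarrow \Gamma_0 \hookrightarrow \del_\Gamma v.
\]
Our class, viewed in $\tilde H_n\big((\link_\Gamma v)^{[n]}\big)$, maps into $\tilde H_n(\Gamma_0)=0$, hence maps to $0$ in $\tilde H_n(\del_\Gamma v)$. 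Chasing this back: the original class in $\tilde H_n(\link_\Gamma v)$ and the skeletal class agree after the inclusion $(\link_\Gamma v)^{[n]}\hookrightarrow \link_\Gamma v$ (a cycle on $n$-faces represents the same homology under this map), so the original class is in the image of the connecting homomorphism, and $\tilde H_{n+1}(\Gamma)\neq 0$.

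The step I expect to require the most care is the skeleton bookkeeping — being precise that an $n$-cycle all of whose simplices are $n$-dimensional faces of $\link_\Gamma v$ (i) genuinely represents a class in $\tilde H_n\big((\link_\Gamma v)^{[n]}\big)$, and (ii) has the same image as the original class after mapping into $\tilde H_n(\link_\Gamma v)$ and then into $\tilde H_n(\del_\Gamma v)$. This is routine but is where one must resist hand-waving: the cleanest formulation is that the chain complexes of $(\link_\Gamma v)^{[n]}$ and $\link_\Gamma v$ agree in homological degrees $\le n$ except possibly in the boundaries landing from degree $n+1$, and those do not affect whether a given $n$-cycle is nullhomologous because the subcomplex and the complex have the same $n$-faces. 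Everything else is a formal diagram chase through the Mayer--Vietoris sequence, using only the contractibility of the closed star.
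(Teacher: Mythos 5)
Your proof is correct and is essentially the same as the paper's: the paper also applies the Mayer--Vietoris sequence for the decomposition of $\Gamma$ into the closed star of $v$ and $\del_{\Gamma}v$, and notes that the inclusion-induced map $\tilde{H}_{n}(\link_{\Gamma}v)\to\tilde{H}_{n}(\del_{\Gamma}v)$ factors through $\tilde{H}_{n}(\Gamma_{0})=0$ because any $n$-cycle is supported on $\left(\link_{\Gamma}v\right)^{[n]}\subseteq\Gamma_{0}$. You have simply spelled out the skeleton bookkeeping and the diagram chase that the paper leaves implicit.
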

\begin{proof}
We use the exactness of the Mayer-Vietoris sequence 
\[
\cdots\rightarrow\tilde{H}_{n+1}(\Gamma)\overset{g}{\rightarrow}\tilde{H}_{n}(\link_{\Gamma}v)\overset{f}{\rightarrow}\tilde{H}_{n}(\del_{\Gamma}v)\rightarrow\cdots.
\]
The result then follows by noting that the map $f$ is induced from
the inclusion map.
\end{proof}
Duval \cite[Theorem 3.3]{Duval:1996} proved that a complex $\Delta$
is sequentially Cohen-Macaulay if and only if $\Delta^{[n]}$ is Cohen-Macaulay
for all $n$. We use this to prove:
\begin{cor}
\label{cor:LiftSCM} If $\del_{\Gamma}v$ is a sequentially Cohen-Macaulay
complex, and $v$ is a shedding vertex of $\Gamma$ with $\tilde{H}_{n}(\link_{\Gamma}v)\neq0$,
then $\tilde{H}_{n+1}(\Gamma)\neq0$.\end{cor}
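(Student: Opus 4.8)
The plan is to deduce Corollary~\ref{cor:LiftSCM} directly from Lemma~\ref{lem:Lift} by exhibiting a suitable subcomplex $\Gamma_0$ of $\del_\Gamma v$ that both contains $(\link_\Gamma v)^{[n]}$ and has vanishing $n$-th reduced homology. The natural candidate is $\Gamma_0 = (\del_\Gamma v)^{[n]}$, the pure $n$-skeleton of the deletion. First I would check the containment: since $v$ is a shedding vertex, every facet of $\del_\Gamma v$ is a facet of $\Gamma$, and in particular every facet of $\link_\Gamma v$ together with $v$ spans a face of $\Gamma$ whose $v$-free part lies in some facet of $\del_\Gamma v$; a short bookkeeping argument then shows that any $n$-face of $\link_\Gamma v$ is contained in an $n$-face (hence in the pure $n$-skeleton) of $\del_\Gamma v$, giving $(\link_\Gamma v)^{[n]} \subseteq (\del_\Gamma v)^{[n]} = \Gamma_0$.

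Next I would establish $\tilde H_n(\Gamma_0) = 0$. By hypothesis $\del_\Gamma v$ is sequentially Cohen-Macaulay, so by Duval's theorem (cited just before the corollary) the pure skeleton $(\del_\Gamma v)^{[n]}$ is Cohen-Macaulay of dimension $n$. A Cohen-Macaulay complex of dimension $n$ has vanishing reduced homology below dimension $n$; but the top-dimensional homology need not vanish, so the key point is really that the \emph{top} reduced homology of this skeleton is zero. This is where I expect the main obstacle to lie: I would need that $(\del_\Gamma v)^{[n]}$ is not merely Cohen-Macaulay but has $\tilde H_n = 0$. This should follow because $n$ is strictly below the dimension of $\del_\Gamma v$ — indeed, $\link_\Gamma v$ is nonempty in dimension $n$ (it carries an $n$-cycle), so $\Gamma$ has faces of dimension at least $n+1$ through $v$, and since every facet of $\del_\Gamma v$ is a facet of $\Gamma$, one argues $\dim \del_\Gamma v \ge n+1$ as well (using that the shedding condition forces $\del_\Gamma v$ to "keep up" with $\Gamma$). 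Then $(\del_\Gamma v)^{[n]}$ is a proper skeleton of a complex of strictly larger dimension, and a proper skeleton of a Cohen-Macaulay complex — more precisely, the $n$-skeleton $\Delta^{[n]}$ when $\dim\Delta > n$ — has $\tilde H_n = 0$ because any $n$-cycle bounds an $(n+1)$-chain built from the $(n+1)$-faces of the ambient complex, and reducedness plus Cohen-Macaulayness below the top handles the rest. I would isolate this as the crucial homological fact and verify it carefully.

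With both hypotheses of Lemma~\ref{lem:Lift} in hand — namely $(\link_\Gamma v)^{[n]} \subseteq \Gamma_0 \subseteq \del_\Gamma v$ and $\tilde H_n(\Gamma_0) = 0$ — the conclusion $\tilde H_{n+1}(\Gamma) \neq 0$ is immediate from that lemma. So the whole argument is: pick $\Gamma_0 = (\del_\Gamma v)^{[n]}$; use the shedding property to get the skeleton containment and to force $\dim \del_\Gamma v \ge n+1$; invoke Duval to get Cohen-Macaulayness of the pure $n$-skeleton; deduce $\tilde H_n(\Gamma_0) = 0$ from Cohen-Macaulayness together with the dimension bound; and finish with Lemma~\ref{lem:Lift}. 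The only real content beyond citation-chasing is the dimension bound on $\del_\Gamma v$ and the vanishing of the top homology of a pure skeleton strictly below the full dimension, so that is where I would concentrate the writing.
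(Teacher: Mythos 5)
There is a genuine gap, and it sits exactly where you predicted the main obstacle would be. Your choice $\Gamma_{0}=(\del_{\Gamma}v)^{[n]}$ cannot work: the ``crucial homological fact'' you propose --- that the pure $n$-skeleton of a Cohen--Macaulay complex of dimension greater than $n$ has vanishing $\tilde{H}_{n}$ --- is false. Your justification is that an $n$-cycle bounds an $(n+1)$-chain built from the $(n+1)$-faces of the ambient complex, but those $(n+1)$-faces do not belong to $\Gamma_{0}=(\del_{\Gamma}v)^{[n]}$, so the cycle does not bound \emph{inside} $\Gamma_{0}$, which is what Lemma~\ref{lem:Lift} requires. Concretely, the pure $1$-skeleton of a $2$-simplex is the boundary of a triangle, with $\tilde{H}_{1}\neq0$, even though the $2$-simplex is Cohen--Macaulay of dimension $2$. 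The top reduced homology of a pure $n$-dimensional skeleton is generically nonzero, and Cohen--Macaulayness only kills homology \emph{below} the top dimension.

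The paper's proof fixes this by taking $\Gamma_{0}=(\del_{\Gamma}v)^{[n+1]}$, one dimension higher. The shedding hypothesis is what makes the containment work at this level: no face of $\link_{\Gamma}v$ is a facet of $\del_{\Gamma}v$ (equivalently, every facet of $\del_{\Gamma}v$ is a facet of $\Gamma$), so every $n$-face of $\link_{\Gamma}v$ is properly contained in a face of $\del_{\Gamma}v$ of dimension at least $n+1$, giving $\bigl(\link_{\Gamma}v\bigr)^{[n]}\subseteq(\del_{\Gamma}v)^{[n+1]}$. Duval's theorem then says $(\del_{\Gamma}v)^{[n+1]}$ is Cohen--Macaulay of dimension $n+1$, so its reduced homology vanishes in all degrees $i\leq n$ --- now $n$ is strictly below the top, so the vanishing is genuine --- and Lemma~\ref{lem:Lift} applies. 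Note also that your trivial containment $(\link_{\Gamma}v)^{[n]}\subseteq(\del_{\Gamma}v)^{[n]}$ never uses the shedding hypothesis, which should have been a warning sign: the corollary is false without it (take $\Gamma$ to be a cone with apex $v$), so any correct proof must invoke shedding at the containment step, as the paper's does.
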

\begin{proof}
By definition of shedding vertex, $\left(\link_{\Gamma}v\right)^{[n]}$
sits inside $\Gamma_{0}=\left(\del_{\Gamma}v\right)^{[n+1]}$. Then
$\Gamma_{0}$ is Cohen-Macaulay of dimension $n+1$, hence $\tilde{H}_{i}(\Gamma_{0})=0$
for all $i\leq n$. The result follows by Lemma \ref{lem:Lift}.
\end{proof}
We take a brief aside to provide a more geometric proof of Corollary
\ref{cor:LiftSCM} when $\del_{\Gamma}v$ satisfies the stronger condition
of shellability. We recall that a shellable complex is built up by
inductively attaching facets, in such a way that the homotopy type
changes only when a facet is attached along its entire boundary. The
following proposition will allow us to give a homotopy type version
of Corollary \ref{cor:LiftSCM} in this broad special case.
\begin{prop}
\label{prop:SheddingLinkAttachment} If $\del_{\Gamma}v$ is a shellable
complex and $v$ is a shedding vertex of $\Gamma$, then $\link_{\Delta}v$
sits inside a contractible subcomplex $\Gamma_{0}$ of $\del_{\Gamma}v$.\end{prop}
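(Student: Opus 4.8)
The plan is to produce $\Gamma_0$ concretely from a shelling of $\del_\Gamma v$ by discarding exactly its homology facets. Fix a shelling $F_1,\dots,F_m$ of $\del_\Gamma v$; write $\overline F$ for the full simplex on a vertex set $F$, and recall from the theory of shellable (possibly nonpure) complexes \cite{Bjorner/Wachs:1997} that each $F_i$ carries a restriction face $R(F_i)$, the unique minimal face of $F_i$ contained in none of $\overline{F_1},\dots,\overline{F_{i-1}}$; one calls $F_i$ a \emph{homology facet} when $R(F_i)=F_i$, equivalently when $\overline{F_i}$ is glued on along its entire boundary $\partial F_i$. Let $G_1,\dots,G_r$ be the homology facets and set $\Gamma_0:=\del_\Gamma v\setminus\{G_1,\dots,G_r\}$, the subcomplex obtained by deleting these (maximal) faces.

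The first thing to check is $\link_\Gamma v\subseteq\Gamma_0$. Because $v$ is a shedding vertex, every facet of $\del_\Gamma v$ is a facet of $\Gamma$; in particular each $G_j$ is a facet of $\Gamma$, so $G_j\cup\{v\}\notin\Gamma$ and hence $G_j\notin\link_\Gamma v$. Since $\link_\Gamma v$ is a subcomplex of $\del_\Gamma v$ containing none of the removed faces $G_1,\dots,G_r$, it lies inside $\Gamma_0$.

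The core step is to show that $\Gamma_0$ is collapsible, and hence contractible (so $\tilde H_n(\Gamma_0)=0$ for every $n$). Set $\Delta_i=\overline{F_1}\cup\dots\cup\overline{F_i}$ and $\Gamma_i=\Delta_i\cap\Gamma_0$, so that $\Gamma_m=\Gamma_0$ while $\Gamma_1=\overline{F_1}$ is a simplex. The plan is to prove $\Gamma_i\searrow\Gamma_{i-1}$ for each $i\ge2$. If $F_i$ is a homology facet, the only face of $\overline{F_i}$ that is removed in forming $\Gamma_0$ is $F_i$ itself (the other $G_j$ are facets, so they are not faces of $F_i$), whence $\overline{F_i}\cap\Gamma_0=\partial F_i$; as $\partial F_i\subseteq\Delta_{i-1}$ we get $\Gamma_i=\Gamma_{i-1}$. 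If $F_i$ is not a homology facet, then $\overline{F_i}\subseteq\Gamma_0$ and so $\Gamma_{i-1}\cap\overline{F_i}=\Delta_{i-1}\cap\overline{F_i}=\{\sigma\subseteq F_i:R(F_i)\not\subseteq\sigma\}=:B$; choosing any vertex $x\in F_i\setminus R(F_i)$ shows $B$ is a cone with apex $x$, hence a nonempty collapsible proper subcomplex of $\overline{F_i}$. The usual collapse $\overline{F_i}\searrow B$ involves only faces of $\overline{F_i}$ outside $B$, none of which lie in $\Gamma_{i-1}$, so it extends to $\Gamma_i=\Gamma_{i-1}\cup\overline{F_i}\searrow\Gamma_{i-1}$. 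Composing, $\Gamma_0=\Gamma_m\searrow\dots\searrow\Gamma_1=\overline{F_1}$, a simplex.

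The delicate point --- which I expect to be the main obstacle --- is the bookkeeping just invoked: one must confirm that removing the homology facets leaves the shelling data of the surviving facets intact, i.e.\ that the restriction subcomplex $B=\Delta_{i-1}\cap\overline{F_i}$ is undamaged in passing to $\Gamma_{i-1}$ when $F_i$ is not a homology facet, and that $\partial F_i\subseteq\Gamma_0$ when it is. Both reduce to the single remark that a face properly contained in $F_i$ cannot equal any facet $G_j\ne F_i$. With that in hand, the collapsing chain above produces a contractible subcomplex $\Gamma_0$ with $\link_\Gamma v\subseteq\Gamma_0\subseteq\del_\Gamma v$; feeding $\tilde H_n(\Gamma_0)=0$ into Lemma~\ref{lem:Lift} then reproves Corollary~\ref{cor:LiftSCM} in the shellable case, with the stronger conclusion that the deletion is in fact collapsible.
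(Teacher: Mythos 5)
Your proposal is correct and follows essentially the same route as the paper: you form $\Gamma_0$ by deleting from $\del_\Gamma v$ exactly the facets that attach along their entire boundary, and you use the shedding property (facets of $\del_\Gamma v$ are facets of $\Gamma$) to see that $\link_\Gamma v$ avoids these and hence lies in $\Gamma_0$. The only difference is that where the paper cites the standard fact that $\Gamma_0$ is contractible (the complex $\Delta^*$ in the proof of Theorem 4.1 of Bj\"orner--Wachs), you prove it directly --- and in the slightly stronger form of collapsibility --- via the interval decomposition $\overline{F_i}\setminus\Delta_{i-1}=[R(F_i),F_i]$; your bookkeeping there is sound.
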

\begin{proof}
Let $\mathcal{F}$ be the set of facets in the shelling of $\del_{\Gamma}v$
which attach along their entire boundary, and let $\Gamma_{0}=(\del_{\Gamma}v)\setminus\mathcal{F}$.
Since $\link_{\Delta}v$ contains no facets of $\del_{\Gamma}v$,
we have that $\link_{\Delta}v\subset\Gamma_{0}$. It is a standard
fact in the theory of shellable complexes that $\Gamma_{0}$ is contractible
-- see e.g. \cite[proof of Theorem 4.1]{Bjorner/Wachs:1996}, where
$\Gamma_{0}$ is written with the notation $\Delta^{*}$. 
\end{proof}
With just a little more work, we can recursively compute the homotopy
type of $\Gamma$. Recall that the wedge product $X\wedge Y$ of two
topological spaces $X$ and $Y$ is obtained by identifying some point
in $X$ with some point in $Y$, and let $\susp(\Delta)$ denote the
suspension of $\Delta$. 
\begin{cor}
\label{cor:ShellableLift} If $\del_{\Gamma}v$ is a shellable complex
and $v$ is a shedding vertex of $\Gamma$, then 
\[
\Gamma\simeq\left(\del_{\Gamma}v\right)\wedge\susp\left(\link_{\Gamma}v\right).
\]
 In particular $\tilde{H}_{n+1}(\Gamma)\cong\tilde{H}_{n+1}(\del_{\Gamma}v)\oplus\tilde{H}_{n}(\link_{\Gamma}v)$.\end{cor}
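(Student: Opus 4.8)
The plan is to realize $\Gamma$ as a mapping cone and then feed in the null-homotopy supplied by Proposition~\ref{prop:SheddingLinkAttachment}. First I would write $\Gamma$ as the union of the deletion $\del_{\Gamma}v$ and the closed star $v*\link_{\Gamma}v$ of $v$. These two subcomplexes cover $\Gamma$, and their intersection is exactly $\link_{\Gamma}v$. Since inclusions of subcomplexes are cofibrations, $\Gamma$ is the homotopy pushout of $\del_{\Gamma}v \leftarrow \link_{\Gamma}v \rightarrow v*\link_{\Gamma}v$; and because the closed star is a cone, hence contractible, this homotopy pushout collapses to the mapping cone $C_i$ of the inclusion $i\colon\link_{\Gamma}v\hookrightarrow\del_{\Gamma}v$. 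Thus $\Gamma\simeq C_i$.

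Next I would observe that $i$ is null-homotopic: by Proposition~\ref{prop:SheddingLinkAttachment}, $\link_{\Gamma}v$ is contained in a contractible subcomplex $\Gamma_0$ of $\del_{\Gamma}v$, so $i$ factors through $\Gamma_0$, and any map factoring through a contractible space is null-homotopic. The main step is then the standard fact that the mapping cone of a null-homotopic map $f\colon A\to X$ of CW-complexes is homotopy equivalent to $X$ with the (unreduced) suspension of $A$ wedged on at a point: since mapping cones of homotopic maps are homotopy equivalent, one may replace $f$ by a constant map, and the mapping cone of a constant map $A\to X$ is $X\vee\susp(A)$, as collapsing the base of the attached cone $CA$ to the basepoint turns $CA$ into $\susp A$. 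Applying this with $A=\link_{\Gamma}v$ and $X=\del_{\Gamma}v$ yields $\Gamma\simeq\left(\del_{\Gamma}v\right)\wedge\susp\left(\link_{\Gamma}v\right)$ in the paper's notation for the wedge.

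For the homology statement, reduced homology carries wedges to direct sums, and the suspension isomorphism gives $\tilde{H}_{j}(\susp(\link_{\Gamma}v))\cong\tilde{H}_{j-1}(\link_{\Gamma}v)$, so in fact $\tilde{H}_{j}(\Gamma)\cong\tilde{H}_{j}(\del_{\Gamma}v)\oplus\tilde{H}_{j-1}(\link_{\Gamma}v)$ in every degree; specializing to $j=n+1$ gives the displayed formula.

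I expect the only real friction to be routine point-set bookkeeping: checking that the pairs involved are genuine CW/cofibration pairs (automatic for subcomplexes of simplicial complexes) and that the ``wedge at some point'' is unambiguous up to homotopy in the degenerate cases — e.g.\ $\link_{\Gamma}v=\emptyset$, where $\susp(\emptyset)=S^{0}$ — which holds since $\del_{\Gamma}v$ is nonempty and the suspension factor is either connected or $S^0$. Everything else is a black-box invocation of standard homotopy theory together with Proposition~\ref{prop:SheddingLinkAttachment}.
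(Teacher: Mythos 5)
Your argument is correct, but it takes a genuinely different route from the paper's. The paper obtains the corollary as a one-line application of Bj\"orner's gluing lemma \cite[Lemma 10.4(ii)]{Bjorner:1995}, applied to the three-piece decomposition $\Gamma=\Delta_{0}\cup\Delta_{1}\cup\Delta_{2}$ with $\Delta_{0}=(\del_{\Gamma}v)\setminus\mathcal{F}$ the contractible subcomplex from Proposition \ref{prop:SheddingLinkAttachment}, $\Delta_{1}$ the complex generated by the fully-attached facets $\mathcal{F}$, and $\Delta_{2}=v*(\link_{\Gamma}v)$. You instead use the two-piece cover $\Gamma=(\del_{\Gamma}v)\cup\left(v*\link_{\Gamma}v\right)$ with intersection $\link_{\Gamma}v$, identify $\Gamma$ up to homotopy with the mapping cone of the inclusion $\link_{\Gamma}v\hookrightarrow\del_{\Gamma}v$, and then use Proposition \ref{prop:SheddingLinkAttachment} only to see that this inclusion is null-homotopic (it factors through the contractible $\Gamma_{0}$), so that the cofiber splits as $\left(\del_{\Gamma}v\right)\wedge\susp\left(\link_{\Gamma}v\right)$ by the standard fact about cofibers of null-homotopic maps. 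Both proofs rest on the same essential input --- the contractible subcomplex $\Gamma_{0}$ --- but yours replaces the ``somewhat difficult gluing result'' that Remark \ref{rem:elementaryVDhomtype} alludes to with textbook facts about mapping cones of cellular maps, and, unlike the alternative sketched in that remark, it does not require $\link_{\Gamma}v$ to be shellable. Your bookkeeping in the degenerate case ($\susp$ of the empty complex being $S^{0}$, and the irrelevance of the wedge point for the homology conclusion) is also handled correctly, so I see no gap; this is a clean self-contained substitute for the citation.
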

\begin{proof}
This follows immediately by \cite[Lemma 10.4(ii)]{Bjorner:1995},
taking $\Delta_{0}=(\del_{\Gamma}v)\setminus\mathcal{F}$ (as in the
proof of Proposition \ref{prop:SheddingLinkAttachment}), $\Delta_{1}$
to be the complex generated by $\mathcal{F}$, and $\Delta_{2}$ to
be $v*(\link_{\Gamma}v)$.\end{proof}
\begin{rem}
\label{rem:elementaryVDhomtype} If in the statement of Corollary
\ref{cor:ShellableLift} we also have $\link_{\Gamma}v$ to be shellable
(as occurs when $\Gamma$ is vertex-decomposable), then there is a
proof avoiding the somewhat difficult gluing result \cite[Lemma 10.4]{Bjorner:1995},
and using instead more elementary results about homotopy type of shellable
complexes. For in this case, by \cite[Lemma 6]{Wachs:1999b} $\Gamma$
can be shelled by taking the shelling of $\del_{\Gamma}v$ followed
by that of $v*\link_{\Gamma}v$. The special case then follows from
\cite[Theorems 3.4 and 4.1]{Bjorner/Wachs:1996} and a straightforward
computation. 
\end{rem}
We are now ready to prove our theorem:
\begin{proof}[Proof of Theorem \ref{thm:vd.reg.generalized}]
 Suppose that $d=\reg(\link_{\Delta}v)$ . Then by Lemma \ref{lem:RegFromTopology}
there exists a face $\sigma$ of $\link_{\Delta}v$ such that 
\[
\tilde{H}_{d-1}(\link_{\link_{\Delta}v}\sigma)=\tilde{H}_{d-1}(\link_{\Delta}(\sigma\cup v))\neq0.
\]
Every link in a sequentially Cohen-Macaulay complex is also sequentially
Cohen-Macaulay, so in particular $(\link_{\Delta}\sigma)\setminus v=\link_{\Delta\setminus v}\sigma$
is sequentially Cohen-Macaulay. By Corollary \ref{cor:LiftSCM} we
have that $\tilde{H}_{d}(\link_{\Delta}\sigma)\neq0$, hence that
$\reg\Delta\geq\reg(\link_{\Delta}v)+1$, which suffices to prove
the result.
\end{proof}
We remark that the ``dual'' characterization of regularity, as in
part (3) of Lemma \ref{lem:RegFromTopology}, seems to be an essential
part of the proof of Theorem \ref{thm:vd.reg.generalized}, as it
is much easier to understand the structure of links (versus induced
subcomplexes) in a vertex-decomposable complex.
\begin{cor}
\label{cor:ComputComplex} If $\Delta$ is a vertex-decomposable simplicial
complex, then $\reg\Delta$ can be recursively computed. If $\Delta$
has $n$ vertices with $\dim\Delta=d$, then computing $\reg\Delta$
requires computing the homology of at most $O(n^{d})$ subcomplexes.\end{cor}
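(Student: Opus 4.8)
The first assertion is a direct consequence of Theorem~\ref{thm:vd.reg.generalized}; the plan is to induct on the number of vertices. If $\Delta$ is a simplex, then $I_\Delta$ is generated by a subset of the variables, so $R/I_\Delta$ is a polynomial ring and $\reg\Delta=0$: this is the base case, and it involves no homology computation. Otherwise, let $v$ be the first vertex of the shedding order. By part~(b) of the definition of vertex-decomposability, both $\del_\Delta v$ and $\link_\Delta v$ are vertex-decomposable, and the shedding order of $\Delta$ restricts to a shedding order of each; in particular $\del_\Delta v$ is sequentially Cohen--Macaulay, so Theorem~\ref{thm:vd.reg.generalized} applies to give $\reg\Delta=\max\{\reg(\del_\Delta v),\ \reg(\link_\Delta v)+1\}$. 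Recursing on the two strictly smaller vertex-decomposable complexes $\del_\Delta v$ and $\link_\Delta v$ then computes $\reg\Delta$.

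For the bound $O(n^d)$, I would track the pair (number of vertices, dimension) as the recursion descends: a deletion lowers the vertex count by one and does not raise the dimension, whereas passing to $\link_\Delta v$ lowers the vertex count by at least one and, crucially, lowers the dimension by at least one, since any face $\tau$ of $\link_\Delta v$ has $\tau\cup\{v\}\in\Delta$ and hence $\dim\tau\le\dim\Delta-1$. The cleanest way to convert this into the claimed count is to observe that deletions and links commute, so every complex whose reduced homology is ever required is of the form $\link_\Delta\sigma$ intersected with an induced subcomplex of $\Delta$, for some face $\sigma$ of $\Delta$; and $\link_\Delta\sigma$ is a simplex --- contributing nothing beyond $\reg\ge0$ --- as soon as $\dim\sigma=\dim\Delta$. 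Thus only the faces $\sigma$ with $\dim\sigma\le d-1$, i.e.\ $|\sigma|\le d$, are relevant, and there are at most $\binom{n}{0}+\dots+\binom{n}{d}=O(n^d)$ subsets of that size. Equivalently, one can read the bound straight off Lemma~\ref{lem:RegFromTopology}(3), which already expresses $\reg\Delta$ through the reduced homology of the links $\link_\Delta\sigma$.

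The step I expect to be the main obstacle is exactly this counting. The unadorned recursion tree of Theorem~\ref{thm:vd.reg.generalized} is not visibly of size $O(n^d)$, because the dimension is frozen along a chain of deletions, so a careless tally is too large by a factor on the order of $n$; getting the exponent right requires the commutation of deletions and links, the resulting reduction of every node to a link of a small face of $\Delta$, and memoization (or the direct appeal to Lemma~\ref{lem:RegFromTopology}). Once those are in place the estimate is routine, and the only point needing genuine care is checking that the shedding order interacts cleanly with these rearrangements.
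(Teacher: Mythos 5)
Your first paragraph matches the paper's (one-sentence) justification of the recursive algorithm, and your observation that a link step drops the dimension while a deletion step drops only the vertex count is exactly the ingredient the paper's count rests on. The gap is in how you convert this into $O(n^{d})$. You assert that every complex arising in the recursion has the form $\link_{\Delta}\sigma$ intersected with an induced subcomplex, and then count only the faces $\sigma$. That undercounts what you set out to count: for a fixed $\sigma$ there are many possible sets of deleted vertices, so the number of \emph{distinct subcomplexes} of this form is not $O(n^{d})$ but potentially exponential, and neither memoization nor the commutation of links and deletions repairs this by itself. The paper counts something else, namely root-to-leaf \emph{paths} of the recursion: since the shedding vertex at each stage is determined by the algorithm, a path is determined by the subset of its at most $n$ steps at which ``link'' is chosen, and that subset has size at most $d$ because each link lowers the dimension; hence there are $O({n \choose d})=O(n^{d})$ paths. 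Your argument can be patched in the same spirit --- a path is equally well determined by its \emph{set of linked vertices}, which is a face of $\Delta$ of size at most $d$ --- but that is a statement about paths, not about distinct subcomplexes, and it is the statement you would need to make explicit.

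Your fallback, reading the bound off Lemma~\ref{lem:RegFromTopology}(3), is sound but is a genuinely different procedure: compute $\tilde{H}_{*}(\link_{\Delta}\sigma)$ for all faces $\sigma$ with $\left|\sigma\right|\le d$ and take the maximum of $\left|\sigma\right|$ plus the top nonvanishing degree. That indeed uses $O(n^{d})$ homology computations and requires no vertex-decomposability at all, so it yields a (stronger) statement for arbitrary complexes of bounded dimension; but it is not the recursive computation of Theorem~\ref{thm:vd.reg.generalized} that the corollary is describing, in which the recursion simply bottoms out at simplices. Either route can be made to work, but as written the main counting step does not yet follow from what you have proved.
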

\begin{proof}
The recursive algorithm is clear. The time bound is because computational
paths involve making at most $n$ choices between the link and deletion,
of which at most $d$ can be ``link''. Hence the number of homology
computations required is $O({n \choose d})=O(n^{d})$.
\end{proof}
The more natural problem from the algebra point of view would be:
given a graph or hypergraph $\H$ with edge ideal $I\subseteq R$,
compute $\reg(R/I)$. Unfortunately, this problem is NP-hard even
for vertex-decomposable graphs, as can be seen by considering a whiskered
graph \cite[Section 4.5]{Woodroofe:2010UNPc}. Since computing the
independence complex of a graph is already an NP-hard problem, the
question of whether $\reg\Delta$ may be efficiently computed from
an appropriate representation of $\Delta$ (e.g., a list of facets)
appears to still be open in general. Corollary \ref{cor:ComputComplex}
settles this question in the affirmative for vertex-decomposable complexes
of fixed dimension such that the shedding order can be efficiently
computed.

\section{\label{sec:Packing-bound}A packing bound on regularity}

In this section, we prove Theorem \ref{intro.thml.packing}. We begin
by observing:
\begin{lem}
\label{lem:CoregVertex} For any simplicial complex $\Delta$, we
have 
\[
\max\{\reg(\link_{\Delta}v)\,\vert\, v\in V(\Delta)\}\leq\reg\Delta\leq\max\{\reg(\link_{\Delta}v)\,\vert\, v\in V(\Delta)\}+1.
\]
\end{lem}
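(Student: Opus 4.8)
The plan is to establish the two inequalities separately. The left-hand one, $\max_{v}\reg(\link_{\Delta}v)\le\reg\Delta$, is immediate from Lemma \ref{lem:RegShrinksInLinks}: apply it with $\sigma=\{v\}$ for each vertex $v$ and take the maximum over $v$.

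For the right-hand inequality $\reg\Delta\le\max_{v}\reg(\link_{\Delta}v)+1$, I would induct on the number of vertices of $\Delta$. When $\Delta$ has at most one vertex (or, more generally, is a simplex) the inequality is trivial, since then $\reg\Delta=0$. For the inductive step, fix an arbitrary vertex $v$. By (\ref{eq:RegRecursiveMax}), which is a consequence of Lemma \ref{lem:RegAlternative}, we have $\reg\Delta\le\max\{\reg(\link_{\Delta}v)+1,\ \reg(\del_{\Delta}v)\}$. If the maximum is attained by the first term we are done, so suppose $\reg\Delta\le\reg(\del_{\Delta}v)$. Since $\del_{\Delta}v$ has one fewer vertex, the inductive hypothesis yields a vertex $w\ne v$ with $\reg(\del_{\Delta}v)\le\reg(\link_{\del_{\Delta}v}w)+1$. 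The key observation is that $\link_{\del_{\Delta}v}w=\del_{\link_{\Delta}w}v$, which is an induced subcomplex of $\link_{\Delta}w$, so Lemma \ref{lem:RegShrinksInSubhypergraphs} (equivalently, characterization~(2) of Lemma \ref{lem:RegFromTopology}) gives $\reg(\link_{\del_{\Delta}v}w)\le\reg(\link_{\Delta}w)$. Chaining these inequalities gives $\reg\Delta\le\reg(\link_{\Delta}w)+1\le\max_{u}\reg(\link_{\Delta}u)+1$, which closes the induction.

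A more topological alternative is to use characterization~(3) of Lemma \ref{lem:RegFromTopology} directly: put $d=\reg\Delta$ and choose a face $\sigma$ with $\tilde{H}_{d-1}(\link_{\Delta}\sigma)\ne0$. If $\sigma\ne\emptyset$, then for any $v\in\sigma$ one has $\link_{\Delta}\sigma=\link_{\link_{\Delta}v}(\sigma\setminus v)$, which already forces $\reg(\link_{\Delta}v)\ge d$. The one delicate case is when the only such face is $\sigma=\emptyset$ --- i.e.\ all the top-degree homology sits on $\Delta$ itself --- and this is precisely the situation that the extra ``$+1$'' is there to absorb; it is disposed of by feeding $\tilde{H}_{d-1}(\Delta)\ne0$ into the Mayer--Vietoris sequence $\tilde{H}_{d-1}(\del_{\Delta}v)\to\tilde{H}_{d-1}(\Delta)\to\tilde{H}_{d-2}(\link_{\Delta}v)$ together with a short induction on the vertex set, just as above. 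I do not anticipate any genuine obstacle beyond handling this ``empty-face'' case with a little care; in either approach it is the only step that is not a one-line citation.
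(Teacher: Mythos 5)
Your main argument is correct and rests on exactly the same two ingredients as the paper's proof: the dichotomy of Lemma \ref{lem:RegAlternative} (via (\ref{eq:RegRecursiveMax})) together with monotonicity of regularity under induced subcomplexes, here in the form $\link_{\del_{\Delta}v}w\subseteq\link_{\Delta}w$. The paper merely packages the induction differently, taking a minimal vertex subset $S$ with $\tilde{H}_{d-1}(\Delta[S])\neq0$ so that deleting any $v\in S$ drops the regularity and Lemma \ref{lem:RegAlternative} then forces $\reg(\link_{\Delta[S]}v)=d-1$; this is your vertex-by-vertex induction unwound in one step.
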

\begin{proof}
The lower bound is Lemma \ref{lem:RegShrinksInLinks}. To prove the
upper bound, suppose that $\reg\Delta=d$. Then, by Lemma \ref{lem:RegFromTopology}
part (2), there is a subset $S$ such that $\tilde{H}_{d-1}(\Delta[S])\neq0$.
Without loss of generality, we can take $S$ to be minimal under inclusion,
so that $d=\reg\Delta[S]$, but for any proper subset $T\subset S$
we have $\reg\Delta[T]<d$. Thus, for any $v\notin S$ we have $\reg(\del_{\Delta[S]}v)=\reg(\Delta[S\setminus v])<d$.
By Lemma \ref{lem:RegAlternative} it then follows that $d=\reg\Delta[S]=\reg(\link_{\Delta[S]}v)+1$.
Since $\link_{\Delta[S]}v=(\link_{\Delta}v)[S]$, we get that $\reg(\link_{\Delta}v)\geq d-1$,
as desired.
\end{proof}
In plain language, Lemma \ref{lem:CoregVertex} says that we can find
a vertex $v$ of $\Delta$ such that the regularity of the link of
$v$ is at most one less than that of $\Delta$.
\begin{rem}
\label{Rem:MoradiKianiShell} Lemma \ref{lem:CoregVertex} was shown
for shellable complexes in \cite[Theorem 2.4]{Moradi/Kiani:2010}.
\end{rem}
For a vertex $v$ in simplicial complex $\Delta$, we let the \emph{degree}
of $v$, denoted $\deg v$, be the degree of $v$ in the corresponding
hypergraph (i.e., the number of edges in $\H(\Delta)$ containing
$v$). Equivalently, $\deg v$ is the number of minimal non-faces
of $\Delta$ containing $v$. Thus, a vertex has degree zero if and
only if it is contained in every maximal face, i.e., if and only if
$\Delta$ is a cone over $\del_{\Delta}v=\link_{\Delta}v$.

Since degree 0 vertices can be deleted without affecting regularity,
the next lemma follows immediately:
\begin{lem}
\label{lem:DegNonzeroRemoval} For any simplicial complex $\Delta$,
we can find a vertex $v$ of non-zero degree such that $\reg\Delta\leq\reg(\link_{\Delta}v)+1$.
\end{lem}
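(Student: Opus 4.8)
The plan is to reduce to Lemma~\ref{lem:CoregVertex} by first discarding the degree-zero vertices, using the observation recorded just above that $\deg w = 0$ holds exactly when $\Delta$ is a cone with apex $w$, in which case $\reg\Delta = \reg(\del_{\Delta}w)$. Two elementary facts make the reduction go through. First, a degree-zero vertex $w$ lies in no minimal non-face of $\Delta$, so the minimal non-faces of $\del_{\Delta}w$ are precisely those of $\Delta$; hence deleting $w$ changes the degree of no other vertex. Second, for $v\neq w$ the complex $\link_{\Delta}v$ is again a cone with apex $w$ (one may adjoin $w$ to any face containing $v$), and since deletion and link commute at distinct vertices, $\link_{\del_{\Delta}w}v = \del_{\link_{\Delta}v}w$; as deleting a cone apex does not change regularity, $\reg(\link_{\del_{\Delta}w}v)=\reg(\link_{\Delta}v)$.

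Granting these, I would let $W$ be the set of degree-zero vertices of $\Delta$ and put $\Delta' = \Delta[V(\Delta)\setminus W]$. Iterating the first fact shows $\Delta$ is the join of $\Delta'$ with the full simplex on $W$, so $\reg\Delta' = \reg\Delta$, and every vertex of $\Delta'$ has positive degree (in $\Delta'$, equivalently in $\Delta$). If $\Delta'$ is empty then $\Delta$ is a simplex and $\reg\Delta = 0$, a case in which there is nothing of interest to prove; otherwise Lemma~\ref{lem:CoregVertex} applied to $\Delta'$ produces a vertex $v$ of $\Delta'$ with $\reg(\link_{\Delta'}v)\ge\reg\Delta'-1 = \reg\Delta-1$. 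This $v$ has positive degree in $\Delta$, and by the second fact (applied once for each vertex of $W$) we get $\reg(\link_{\Delta}v)=\reg(\link_{\Delta'}v)\ge\reg\Delta-1$, which is exactly the desired inequality.

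I do not expect a genuine obstacle here: the proof is the bookkeeping encapsulated in the two facts above, together with the trivial simplex case (which is presumably meant to be excluded by the implicit reading of the statement). If one prefers to avoid the join description, an equivalent packaging is an induction on $|V(\Delta)|$: choose $v$ as in Lemma~\ref{lem:CoregVertex}; if $\deg v>0$ we are done, and if $\deg v = 0$ we apply the inductive hypothesis to $\del_{\Delta}v$ and carry the vertex it returns back to $\Delta$ using the same cone-apex observations.
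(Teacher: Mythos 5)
Your proposal is correct and matches the paper's intent exactly: the paper gives no written proof beyond the remark that degree-zero vertices can be deleted without affecting regularity (after which Lemma \ref{lem:CoregVertex} applies), and your argument simply fills in the bookkeeping behind that remark. The degenerate simplex case you flag is indeed implicitly excluded in the paper as well.
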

By repeated applications of Lemma \ref{lem:DegNonzeroRemoval}, we
obtain:
\begin{thm}
\label{thm:WeakPackingBound} For any simplicial complex $\Delta$,
we have $\reg\Delta$ to be at most the maximum size of a minimal
face $\sigma$ with the property that $\link_{\Delta}\sigma$ is a
simplex.
\end{thm}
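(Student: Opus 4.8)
The plan is to iterate Lemma~\ref{lem:DegNonzeroRemoval}, peeling off one vertex at a time and recording the vertices we remove. Starting from $\Delta_0 = \Delta$, Lemma~\ref{lem:DegNonzeroRemoval} furnishes a vertex $v_1$ of non-zero degree with $\reg\Delta_0 \le \reg(\link_{\Delta_0} v_1) + 1$. Set $\Delta_1 = \link_{\Delta_0} v_1$ and repeat: as long as $\Delta_i$ is not a simplex it has a vertex $v_{i+1}$ of non-zero degree, and $\reg\Delta_i \le \reg(\link_{\Delta_i} v_{i+1}) + 1$. The process terminates at some step $\ell$ with $\Delta_\ell$ a simplex, hence $\reg\Delta_\ell = 0$, and chaining the inequalities gives $\reg\Delta \le \ell$. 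So it remains to identify $\{v_1, \dots, v_\ell\}$ with a minimal face $\sigma$ of $\Delta$ whose link is a simplex, and to check $|\sigma| = \ell$.

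First I would record the standard iterated-link identity $\link_{\Delta_i} v_{i+1} = \link_{\Delta}\{v_1, \dots, v_{i+1}\}$, which follows by an easy induction from the definition of link. In particular $\{v_1, \dots, v_i\}$ is a face of $\Delta$ at each stage (otherwise the link would be the void complex, not a simplex, and the process would have stopped earlier), and $\Delta_\ell = \link_\Delta \sigma$ where $\sigma = \{v_1, \dots, v_\ell\}$. Since $\Delta_\ell$ is a simplex, $\sigma$ is a face whose link is a simplex, and $\reg\Delta \le \ell = |\sigma|$. To finish, I would pass to a \emph{minimal} such face contained in $\sigma$: if some proper subface $\tau \subsetneq \sigma$ already has $\link_\Delta \tau$ a simplex, replace $\sigma$ by $\tau$. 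But this only shrinks $|\sigma|$, which would weaken the bound. So the correct statement to chase is that $\reg\Delta$ is bounded by the maximum over all minimal such faces --- and the face $\sigma$ we produced, while perhaps not minimal, \emph{contains} a minimal one, and we should argue that the maximal minimal face is at least as big as whatever bound the peeling produces.

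The cleanest way around this is to run the argument in reverse. Let $\sigma$ be a minimal face with $\link_\Delta\sigma$ a simplex, chosen of maximum size $m$; I claim $\reg\Delta \le m$. Here is where I would be more careful about which vertex Lemma~\ref{lem:DegNonzeroRemoval} hands us: at each stage the lemma gives \emph{a} vertex of non-zero degree, and I want to argue that the peeling cannot continue for more than $m$ steps before reaching a simplex. Suppose it did, producing a chain of length $\ell > m$; then $\{v_1, \dots, v_\ell\}$ is a face whose link is a simplex, so it contains a minimal such face $\tau$. If $\tau = \{v_1,\dots,v_j\}$ for some $j$, then $\Delta_j = \link_\Delta\tau$ is a simplex and the process would have stopped at step $j \le m$, contradiction --- but there is no reason $\tau$ should be an initial segment.

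The main obstacle, then, is exactly this mismatch: the greedy peeling does not obviously stop at a minimal face. I expect the resolution is that the statement should be read with "minimal" playing a genuine role only for the lower direction of the max, and the honest argument is: whenever $\link_\Delta\{v_1,\dots,v_j\}$ is a simplex we may \emph{stop} (we are free to stop early in the chain of inequalities, since $\reg(\text{simplex}) = 0$), so we should stop at the first such $j$; then $\{v_1,\dots,v_j\}$ is a face whose link is a simplex but no proper \emph{initial segment} has this property. One then checks --- and this is the real content --- that such a face is in fact minimal (if $\tau \subsetneq \{v_1,\dots,v_j\}$ had $\link_\Delta\tau$ a simplex, then $\link_\Delta(\tau \cup \{v_1,\dots,v_{j-1}\}) = \link_{\Delta_{j-1}}(\text{something}) $, and a short argument using that a face of a simplex has simplex link forces $j$ down). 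Granting that, $\{v_1,\dots,v_j\}$ is a minimal face with simplex link, so $j \le m$, and $\reg\Delta \le j \le m$, completing the proof.
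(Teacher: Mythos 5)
Your proposal follows exactly the route the paper takes (its entire proof is the single sentence ``by repeated applications of Lemma~\ref{lem:DegNonzeroRemoval}''), and you have put your finger on precisely the weak point: the face $\sigma=\{v_1,\dots,v_\ell\}$ produced by the peeling satisfies $\link_\Delta\sigma$ a simplex and $\reg\Delta\le|\sigma|$, but nothing forces $\sigma$ to be \emph{minimal} among faces with simplex link, and the theorem's maximum ranges only over minimal such faces. The repair you propose --- stop at the first index $j$ for which $\link_\Delta\{v_1,\dots,v_j\}$ is a simplex, and argue that this face must then be minimal --- is the step you flag as ``the real content,'' and it is false. Observe that $\link_\Delta\sigma$ is a simplex exactly when $\sigma$ lies in a unique facet, so the property is upward closed in the face poset. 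Now let $\Delta$ be generated by the eight triangles $\{i,j,a_{ij}\}$, $\{i,j,b_{ij}\}$, where $\{i,j\}$ runs over the four edges of a $4$-cycle on $1,2,3,4$. A legitimate run of the peeling takes $v_1=1$ (nonzero degree, and $\reg(\link_\Delta 1)=1=\reg\Delta-1$ since that link is disconnected) and then $v_2=a_{12}$ (nonzero degree in $\link_\Delta 1$ because of the minimal nonface $\{a_{12},b_{12}\}$); it stops, for the first time, at $\sigma=\{1,a_{12}\}$. Yet $\{a_{12}\}$ already lies in a unique facet, so $\sigma$ is not minimal, and your sketched argument for forcing $j$ down has no purchase because $\link_\Delta\{1\}$ is not a simplex.

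The same example shows the difficulty is not one you could have argued around: the vertices $1,2,3,4$ and the four square edges each lie in at least two facets, while each $a_{ij}$, $b_{ij}$ lies in exactly one, so the minimal faces with simplex link are precisely the eight singletons and the claimed bound is $1$; but the induced subcomplex on $\{1,2,3,4\}$ is a hollow square, so $\reg\Delta\ge2$ by Lemma~\ref{lem:RegFromTopology}(2). What the peeling actually proves is that $\reg\Delta\le|\sigma|$ for \emph{some} face $\sigma$ with simplex link obtained by successively choosing positive-degree vertices, no proper initial segment having simplex link; the statement needs to take its maximum over that class of faces (compare the graph case, Theorem~\ref{intro.thml.packing}, where the maximum is correctly taken over all maximal star packings). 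In short: your instinct to distrust the minimality claim was exactly right, and the proposal is incomplete only because it ends by deferring to a claim that cannot be proved.
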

In the case where $\Delta$ is the independence complex of a graph,
we can do somewhat better:
\begin{lem}
\label{lem:DegGr1Removal} Suppose that $G$ is a graph having no
isolated edges, and let $\Delta=\Delta(G)$. Then we can find a vertex
$v$ with $\deg v>1$ such that $\reg\Delta\leq\reg(\link_{\Delta}v)+1$.\end{lem}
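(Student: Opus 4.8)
The plan is to start from Lemma \ref{lem:DegNonzeroRemoval}, which already supplies a vertex $v$ of nonzero degree with $\reg\Delta\le\reg(\link_\Delta v)+1$. The only way this can fail to be the vertex we want is if $\deg v=1$, i.e. $v$ lies in a unique edge $\{v,w\}$ of $G$. Since $G$ has no isolated edges, that edge is not a connected component, so $w$ has a neighbor $u\ne v$. The idea is to play $v$ and $w$ off against each other: I will show that if $\deg v=1$ then we may replace $v$ by $w$, which has $\deg w>1$, without increasing the relevant quantity. Concretely, I would first observe that when $\deg v=1$ the deletion $\del_\Delta v=\Delta(G\setminus v)$ coincides, up to the cone point $v$, with a complex closely tied to $\link_\Delta v=\Delta(G\setminus N_G[v])=\Delta(G\setminus\{v,w\})$; in fact $v$ is a cone vertex over $\link_\Delta v$ inside $\del_\Delta v$ is not quite right, but the key structural fact is that $w$ becomes an isolated vertex nowhere — rather, I want to use that the link of $w$ already "sees" everything the link of $v$ does.

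The cleaner route is via regularity directly. Using Lemma \ref{lem:RegAlternative} at the vertex $v$: either $\reg\Delta=\reg(\link_\Delta v)+1$ — in which case I claim $\reg(\link_\Delta w)+1\ge\reg\Delta$ as well, because $\link_\Delta v=\Delta(G\setminus\{v,w\})$ is an induced subhypergraph of $\link_\Delta w=\Delta(G\setminus N_G[w])$ only after checking that $w$'s closed neighborhood, with $v$ removed (as $\deg v=1$ forces $v\in N_G[w]$), gives $G\setminus N_G[w]$ as an induced subgraph of $G\setminus\{v,w\}$, so by Lemma \ref{lem:RegShrinksInSubhypergraphs} $\reg(\link_\Delta w)\le\reg(\link_\Delta v)$, which is the wrong direction. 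So instead I would argue on the $\del$ side: if $\reg\Delta=\reg(\del_\Delta v)=\reg\Delta(G\setminus v)$, then $G\setminus v$ has fewer vertices, $w$ is isolated in $G\setminus v$, deleting it changes nothing, and we induct. If on the other hand $\reg\Delta=\reg(\link_\Delta v)+1$, then since $\link_\Delta v=\del_\Delta w$ restricted appropriately — precisely, $\link_\Delta v=\Delta(G\setminus\{v,w\})$ and $\del_\Delta w=\Delta(G\setminus w)$, and $v$ is isolated in $G\setminus w$, so $\reg(\del_\Delta w)=\reg(\link_\Delta v)$. Hence by Lemma \ref{lem:RegAlternative} applied at $w$, $\reg\Delta$ is either $\reg(\del_\Delta w)=\reg(\link_\Delta v)<\reg\Delta$ (impossible) or $\reg(\link_\Delta w)+1$; the first being impossible, we conclude $\reg\Delta=\reg(\link_\Delta w)+1$, and $\deg w>1$ because $w$ has neighbors $v$ and $u$. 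This is exactly the desired vertex.

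To make the induction bottom out, the outer structure would be: induct on the number of vertices of $G$; if $\deg v>1$ for the vertex produced by Lemma \ref{lem:DegNonzeroRemoval} we are done immediately, and otherwise the case analysis above either hands us $w$ directly (the $\link_\Delta v$ case) or reduces to $G\setminus v$ — but I should check $G\setminus v$ still has no isolated edges, which holds since the only edge we removed was $\{v,w\}$ and $w$ retained its neighbor $u$, turning $\{u,w\}$ into a possibly-isolated edge only if $u$ had no other neighbors; if that degenerate situation arises, $\{u,w\}$ is an isolated edge in $G\setminus v$ and I would handle it by noting the independence complex then splits off a $0$-sphere factor and arguing on the rest. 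The main obstacle is precisely this bookkeeping at the boundary of the induction — ensuring that the reduced graph $G\setminus v$ either still satisfies the hypothesis or can be dispatched by hand — rather than the core swap of $v$ for $w$, which is a short exact/Mayer–Vietoris-type observation once the isolated-vertex identifications $\del_\Delta v\leftrightarrow G\setminus v$ and $\del_\Delta w\leftrightarrow G\setminus w$ are in place.
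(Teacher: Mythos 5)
Your proposal is correct and follows essentially the same route as the paper's proof: isolate a degree-1 vertex $v$ with neighbor $w$, use the identification $\reg(\link_{\Delta}v)=\reg(\del_{\Delta}w)$ (since $v$ becomes isolated in $G\setminus w$) together with Lemma \ref{lem:RegAlternative} to either transfer the bound to $w$ (which has degree $>1$ because $G$ has no isolated edges) or delete $v$ without changing regularity and induct, handling a newly created isolated edge as a separate sub-case. The paper organizes the dichotomy around whether $\reg(\del_{\Delta}y)$ equals $\reg\Delta$ rather than applying Lemma \ref{lem:RegAlternative} at the degree-1 vertex first, but the content is the same, and your boundary bookkeeping matches the paper's own (equally terse) treatment.
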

\begin{proof}
If $G$ has no vertices of degree 1, then the result follows by Lemma
\ref{lem:CoregVertex}. Otherwise, let $x$ be a vertex of degree
1, and let $y$ be the unique neighbor of $x$ in $G$. We have $\link_{\Delta}x=\Delta(G\setminus\{x,y\})$,
hence 
\[
\reg(\link_{\Delta}x)=\reg(\Delta(G\setminus x\setminus y))=\reg(\Delta(G\setminus y))=\reg(\del_{\Delta}y),
\]
where the second equality follows because $x$ is an isolated vertex
in $G\setminus y$. Then by Lemma \ref{lem:RegAlternative} there
are two possibilities:
\begin{casenv}
\item $\reg(\link_{\Delta}x)+1=\reg(\del_{\Delta}y)+1=\reg\Delta$.

Then by Lemma \ref{lem:RegAlternative} we have $\reg\Delta=\reg(\link_{\Delta}y)+1$,
and we take $v=y$.\medskip{}

\item $\reg(\del_{\Delta}x)=\reg\Delta$.

If $G\setminus x$ has no isolated edge, then there is a vertex $v$
with the desired properties by induction on the number of vertices.

If $G\setminus x$ has an isolated edge, then necessarily this edge
has the form $\{y,z\}$ for some vertex $z$. Then $\reg(\Delta)=\reg(\del_{\Delta}x)=\reg(\link_{\del_{\Delta}x}y)+1\leq\reg(\link_{\Delta}y)+1$,
and $y$ is the desired vertex. (Alternately, the same follows by
observing that $x$, $y$, and $z$ form a connected component isomorphic
to a path of length 3, together with the fact that regularity adds
over connected components.) \qedhere

\end{casenv}
\end{proof}

\begin{proof}[Proof of Theorem \ref{intro.thml.packing}]
 We build up a set $\sigma$ of the centers of center-separated stars
recursively. Begin with $\sigma=\emptyset$. At each step, Lemma \ref{lem:DegGr1Removal}
provides us a vertex $v$ of degree at least 2. We delete $v$ and
its neighbors (since $\link_{\Delta(G)}v=\Delta(G\setminus N[v])$),
add $v$ to $\sigma$, and set aside any isolated edges so created.
We repeat this process until no vertices with degree $\geq2$ remain.

By construction, the star at the vertex $v$ chosen at some step is
center-separated from the stars at vertices chosen in any earlier
steps. Thus, the stars centered at the vertices of $\sigma$ form
a center-separated star packing $\mathcal{P}$. The packing is maximal,
since the recursion terminated when no nondegenerate stars remained.

Moreover, $\link_{\Delta(G)}\sigma=\Delta(G\setminus N[\sigma])$
is the independence complex of a subgraph of $G$ consisting of the
$\ell$ isolated edges set aside during the process, together with
some number of isolated vertices. In particular, $\reg(\link_{\Delta(G)}\sigma)=\ell$.
Then Lemma~\ref{lem:DegGr1Removal} gives the desired inequality
\[
\reg(R/I)=\reg\Delta(G)\leq\reg(\link_{\Delta}\sigma)+\vert\sigma\vert=\ell+\vert\sigma\vert=\zeta_{\mathcal{P}}\leq\zeta(G).\qedhere
\]
\end{proof}
\begin{rem}
\label{Rem:MoradiKianiVD} The parameter $\zeta(G)$ is clearly at
most the parameter $a'(G)$ used in \cite[Theorem 2.1]{Moradi/Kiani:2010};
and Theorem \ref{intro.thml.packing} does not require vertex-decomposability,
as their result does.\end{rem}
\begin{example}
It is instructive to examine the graph $G$ pictured in Figure \ref{fig:PackingExample}.
Because $G$ is chordal, $\reg(R/I)=\indmatch(G)=2$ \cite[Corollary 1.7]{Ha/VanTuyl:2008}.
Our star-packing invariant $\zeta(G)$ is 2 for this graph, achieved
by taking the star at $u$ (leaving an isolated edge). Thus, $\zeta(G)=\reg(R/I)$
here. We remark that this example shows that the minimax version of
$\zeta(G)$ (i.e., minimum size of a maximal star packing, plus the
number of leftover edges) will not bound regularity, as taking a star
at $w$ will show.

\begin{figure}
\includegraphics{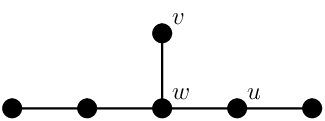}\caption{\label{fig:PackingExample} A graph $G$ with $\zeta(G)=\reg(I/R)=2$,
smaller than $\alpha(G)=3$.}
\end{figure}

The bound from Theorem \ref{thm:WeakPackingBound} is on the other
hand 3, as can be achieved by taking all the vertices of degree 1.
Since the independence number $\alpha(G)$ is also 3, and since $\reg(R/I)$
is obviously at most $\alpha(G)$, the latter bound is in this case
trivial. There are situations where the bound from Theorem \ref{thm:WeakPackingBound}
is nontrivial: for example, if we expand $G$ by adding a pendant
at $v$, then the resulting graph $K$ still has bound from Theorem
\ref{thm:WeakPackingBound} equal to $3$, although $\alpha(K)=4$.
\end{example}

\section*{Acknowledgements}

We thank Ed Swartz for pointing out the elementary proof of the shellable
case of Corollary \ref{cor:ShellableLift} sketched in Remark \ref{rem:elementaryVDhomtype}.
We also thank the anonymous referee for his or her useful comments.

\bibliographystyle{hamsplain}
\bibliography{2_Users_russw_Documents_Research_mypapers_Resul___arity_of_square-free_monomial_ideals_Master}

\def\cprime{$'$}
\providecommand{\bysame}{\leavevmode\hbox to3em{\hrulefill}\thinspace}
\providecommand{\href}[2]{#2}
\begin{thebibliography}{10}

\bibitem{Alon/Kalai/Matousek/Meshulam:2002}
Noga Alon, Gil Kalai, Ji{\v{r}}{\'{\i}} Matou{\v{s}}ek, and Roy Meshulam,
  \emph{Transversal numbers for hypergraphs arising in geometry}, Adv. in Appl.
  Math. \textbf{29} (2002), no.~1, 79--101, {10.1016/S0196-8858(02)00003-9}.

\bibitem{Bjorner:1995}
Anders Bj{\"o}rner, \emph{Topological methods}, Handbook of combinatorics,
  Vol.\ 1,\ 2, Elsevier, Amsterdam, 1995, pp.~1819--1872.

\bibitem{Bjorner/Wachs:1996}
Anders Bj{\"o}rner and Michelle~L. Wachs, \emph{Shellable nonpure complexes and
  posets. {I}}, Trans. Amer. Math. Soc. \textbf{348} (1996), no.~4, 1299--1327.

\bibitem{Bjorner/Wachs:1997}
\bysame, \emph{Shellable nonpure complexes and posets. {II}}, Trans. Amer.
  Math. Soc. \textbf{349} (1997), no.~10, 3945--3975.

\bibitem{Bouchat/Ha/OKeefe:2011}
Rachelle~R. Bouchat, Huy~T{\`a}i H{\`a}, and Augustine O'Keefe, \emph{Path
  ideals of rooted trees and their graded {B}etti numbers}, J. Combin. Theory
  Ser. A \textbf{118} (2011), no.~8, 2411--2425.

\bibitem{Chardin:2007}
Marc Chardin, \emph{Some results and questions on {C}astelnuovo-{M}umford
  regularity}, Syzygies and {H}ilbert functions, Lect. Notes Pure Appl. Math.,
  vol. 254, Chapman \& Hall/CRC, Boca Raton, FL, 2007,
  {10.1201/9781420050912.ch1}, pp.~1--40.

\bibitem{Dao/Huneke/Schweig:2013}
Hailong Dao, Craig Huneke, and Jay Schweig, \emph{Bounds on the regularity and
  projective dimension of ideals associated to graphs}, J. Algebraic Combin.
  \textbf{38} (2013), no.~1, 37--55, {arXiv:1110.2570}.

\bibitem{Duval:1996}
Art~M. Duval, \emph{Algebraic shifting and sequentially {C}ohen-{M}acaulay
  simplicial complexes}, Electron. J. Combin. \textbf{3} (1996), no.~1,
  Research Paper 21, approx.\ 14 pp.\ (electronic).

\bibitem{Francisco/Ha/VanTuyl:2009}
Christopher~A. Francisco, Huy~T{\`a}i H{\`a}, and Adam Van~Tuyl,
  \emph{Splittings of monomial ideals}, Proc. Amer. Math. Soc. \textbf{137}
  (2009), no.~10, 3271--3282, {arXiv:0807.2185},
  {10.1090/S0002-9939-09-09929-8}.

\bibitem{Ha/VanTuyl:2008}
Huy~T{\`a}i H{\`a} and Adam Van~Tuyl, \emph{Monomial ideals, edge ideals of
  hypergraphs, and their graded {B}etti numbers}, J. Algebraic Combin.
  \textbf{27} (2008), no.~2, 215--245, {arXiv:math/0606539}.

\bibitem{Herzog:2007}
J{\"u}rgen Herzog, \emph{A generalization of the {T}aylor complex
  construction}, Comm. Algebra \textbf{35} (2007), no.~5, 1747--1756,
  {10.1080/00927870601139500}.

\bibitem{Herzog/Hibi:2011}
J{\"u}rgen Herzog and Takayuki Hibi, \emph{Monomial ideals}, Graduate Texts in
  Mathematics, vol. 260, Springer-Verlag London Ltd., London, 2011,
  {10.1007/978-0-85729-106-6}.

\bibitem{Kalai/Meshulam:2006}
Gil Kalai and Roy Meshulam, \emph{Intersections of {L}eray complexes and
  regularity of monomial ideals}, J. Combin. Theory Ser. A \textbf{113} (2006),
  no.~7, 1586--1592.

\bibitem{Kimura:2012}
Kyouko Kimura, \emph{Non-vanishingness of {B}etti numbers of edge ideals},
  Harmony of {G}r\"obner bases and the modern industrial society, World Sci.
  Publ., Hackensack, NJ, 2012, {arXiv:1110.2333}, pp.~153--168.

\bibitem{Kozlov:1997}
Dmitry~N. Kozlov, \emph{General lexicographic shellability and orbit
  arrangements}, Ann. Comb. \textbf{1} (1997), no.~1, 67--90,
  {10.1007/BF02558464}.

\bibitem{Kummini:2009}
Manoj Kummini, \emph{Regularity, depth and arithmetic rank of bipartite edge
  ideals}, J. Algebraic Combin. \textbf{30} (2009), no.~4, 429--445,
  {10.1007/s10801-009-0171-6}.

\bibitem{Miller/Sturmfels:2005}
Ezra Miller and Bernd Sturmfels, \emph{Combinatorial commutative algebra},
  Graduate Texts in Mathematics, vol. 227, Springer-Verlag, New York, 2005.

\bibitem{Moradi/Kiani:2010}
Somayeh Moradi and Dariush Kiani, \emph{Bounds for the regularity of edge
  ideals of vertex decomposable and shellable graphs}, Bull. Iranian Math. Soc.
  \textbf{36} (2010), no.~2, 267--277, {arXiv:1007.4056}.

\bibitem{Morey/Villarreal:2012}
Susan Morey and Rafael~H. Villarreal, \emph{Edge ideals: algebraic and
  combinatorial properties}, Progress in commutative algebra 1, de Gruyter,
  Berlin, 2012, pp.~85--126.

\bibitem{Nevo:2011}
Eran Nevo, \emph{Regularity of edge ideals of ${C}_4$-free graphs via the
  topology of the lcm-lattice}, J. Combin. Theory Ser. A \textbf{118} (2011),
  no.~2, 491 -- 501, {arXiv:0909.2801}, {10.1016/j.jcta.2010.03.008}.

\bibitem{Stanley:1996}
Richard~P. Stanley, \emph{Combinatorics and commutative algebra}, second ed.,
  Progress in Mathematics, vol.~41, Birkh\"auser Boston Inc., Boston, MA, 1996.

\bibitem{VanTuyl:2009}
Adam Van~Tuyl, \emph{Sequentially {C}ohen-{M}acaulay bipartite graphs: vertex
  decomposability and regularity}, Arch. Math. (Basel) \textbf{93} (2009),
  no.~5, 451--459, {arXiv:0906.0273}.

\bibitem{Villarreal:2001}
Rafael~H. Villarreal, \emph{Monomial algebras}, Monographs and Textbooks in
  Pure and Applied Mathematics, vol. 238, Marcel Dekker Inc., New York, 2001.

\bibitem{Wachs:1999b}
Michelle~L. Wachs, \emph{Obstructions to shellability}, Discrete Comput. Geom.
  \textbf{22} (1999), no.~1, 95--103, {arXiv:math/9707216}.

\bibitem{Woodroofe:2010UNPc}
Russ Woodroofe, \emph{Matchings, coverings, and {C}astelnuovo-{M}umford
  regularity}, to appear in J. Commut. Algebra, {arXiv:1009.2756}.

\bibitem{Woodroofe:2009a}
\bysame, \emph{Vertex decomposable graphs and obstructions to shellability},
  Proc. Amer. Math. Soc. \textbf{137} (2009), no.~10, 3235--3246,
  {arXiv:0810.0311}.

\end{thebibliography}


\def\cprime{$'$}
\providecommand{\bysame}{\leavevmode\hbox to3em{\hrulefill}\thinspace}
\providecommand{\href}[2]{#2}
\begin{thebibliography}{1}

\bibitem{Ha:2014}
Huy~T{\`a}i H{\`a}, \emph{Regularity of squarefree monomial ideals},
  Connections between algebra, combinatorics, and geometry, Springer Proc.
  Math. Stat., vol.~76, Springer, New York, 2014, pp.~251--276.

\bibitem{Ha/Woodroofe:2014}
Huy~T{\`a}i H{\`a} and Russ Woodroofe, \emph{Results on the regularity of
  square-free monomial ideals}, Adv. in Appl. Math. \textbf{58} (2014), 21--36,
  {arXiv:1301.6779}.

\bibitem{Hatcher:2002}
Allen Hatcher, \emph{Algebraic topology}, Cambridge University Press,
  Cambridge, 2002, {http://www.math.cornell.edu/$\sim$hatcher/AT/ATpage.html}.

\bibitem{Khosh-Ahang/Moradi:2016UNP}
Fahimeh Khosh-Ahang and Somayeh Moradi, \emph{Matchings in hypergraphs and
  {C}astelnuovo-{M}umford regularity}, {arXiv:1601.01456}.

\end{thebibliography}

\end{document}